\theoremstyle{plain}
\newtheorem{theorem}{Theorem}
\newtheorem{lemma}{Lemma}
\newtheorem{proposition}{Proposition}
\begin{document}
\title[Relational Constraints and Classes of Functions]
{On Closed Sets of Relational Constraints and
 Classes of Functions Closed under
 Variable Substitutions}
\author{Miguel Couceiro}
\address{Department of Mathematics, Statistics and Philosophy\\
University of Tampere\\ 
Kalevantie 4, 33014 Tampere, Finland}
\email{Miguel.Couceiro@uta.fi}
\author{Stephan Foldes}
\address{Institute of Mathematics, Tampere University of Technology\\ 
PL553, 33101 Tampere, Finland}
\email{sf@tut.fi}
\keywords{Relations, constraints, preservation, constraint satisfaction, function class, clones, minors,
 superposition, closure conditions, local closure}
\subjclass{08A02}
\date{Final version $01$-$2005$}

\bigskip

\begin{abstract} Pippenger's Galois theory of finite functions and relational constraints is extended
to the infinite case. The functions involved are functions of several variables on a set $A$ and
taking values in a possibly different set $B$, where any or both of $A$ and $B$ may be finite or infinite.
\end{abstract}

\maketitle

\section{Basic Concepts and Terminology}

In \cite{G} Geiger determined, by explicit closure conditions,
 the closed classes of endofunctions of several variables (operations) and the closed classes of 
relations (predicates)
on a finite set $A$. These two dual closure systems are related in a Galois connection given by 
 the "preservation" relation between endofunctions and relations.
This Galois theory was also developed independently by Bodnarchuk, Kalu\v{z}nin, Kotov and Romov in 
\cite{BK}. Removing the finiteness restriction on the underlying set, in \cite{Sz} Szab\' o 
characterized the closed classes of endofunctions and closed sets of relations on arbritrary sets.
These characterizations involve a local closure property as well as closure under a general scheme
of combining families of relations into a new relation, properly extending the schemes described by Geiger
in the case of finite sets. Different approaches and formulations, as well as variant Galois theories 
were developed by P\" oschel in \cite{Po2}, \cite{Po3}, and \cite{PK} in the case of finite sets 
(see also \cite{R} and \cite{B} for further extensions).

There are many natural classes of functions that can not be defined by preservation of a single relation 
(or preservation of each member of a family of relations),
e.g. monotone decreasing functions on an ordered set, or Boolean functions
 whose Zhegalkin polynomial has degree at most $m\geq 0$.
However such classes can often be described as consisting of those functions that "transform" one 
relation to another relation. Also, many natural classes are not classes of endofunctions,
the sets in which the function variables are interpreted being different from the codomain of 
function values, e.g. rank functions of matroids. In the case of finite sets a theory of such 
functions of several variables, defined as functions from a cartesian product $A_1\times \ldots \times A_n$
of finite sets to a finite set $B$, was developed by P\" oschel in \cite{Po1}:
relations as ordinarily understood are replaced by tuples of relations, 
then the notion of preservation of relations
is naturally extended to such multisorted functions and relational tuples,
and the closed classes of functions and relational tuples are determined with respect to the arising 
Galois connection.
Still in the case of finite sets, in \cite{Pi2} Pippenger developed a particular Galois theory 
for functions $A^n\rightarrow B$, where the dual object role of relations is replaced 
 by ordered pairs of relations called "constraints".
In this paper we extend this latter theory by removing the finiteness restriction.

The functions of several variables we consider in this paper are defined on arbitrary sets, 
not necessarily finite, 
taking values in another, possibly different and possibly infinite set.
 The relations and relational constraints that we consider are also defined on arbitrary,
 not necessarily finite sets.
Positive integers are thought of as ordinals according to the von Neumann conception, 
i.e. each ordinal is just the set of lesser ordinals. Thus,
 for a positive integer $n$ and a set $A$, the $n$-\emph{tuples} in $A^n$ are formally maps from $\{0,\ldots ,n-1\}$ to $A$.
The notation $(a_t\mid t\in n)$ means the $n$-tuple mapping $t$ to $a_t$ for each $t\in n$. 
The notation $(b^1\ldots b^n)$ means the $n$-tuple mapping $t$ to $b^{t+1}$ for each $t\in n$.
 A map (function)
is always thought of as having a specific domain, codomain and graph. 
We need this formalism in order to streamline certain definitions and arguments in later Sections of this paper.

Consider arbitrary non-empty sets $A$ and $B$.

 A \emph{$B$-valued function of several variables on $A$} (or simply, \emph{$B$-valued function on $A$})
is a map $f: A^n \rightarrow B$,
 where the \emph{arity} $n$ is a positive integer.
 Thus the set of all $B$-valued functions on $A$ is $\cup _{n\geq 1}B^{A^n}$.
We also use the term \emph{class} for a set of functions.
If $A=B$, then the $B$-valued functions on $A$ are called \emph{operations on $A$}.
For a fixed arity $n$, the $n$ different \emph{projection maps} 
${\bf a}=(a_t\mid t\in n)\mapsto a_i$, $i\in n$, are 
also called \emph{variables}.

If $l$ is a map from $n$ to $m$ then the $m$-ary function $g$ defined by 
\begin{displaymath}
g({\bf a})=f({\bf a}\circ l)
\end{displaymath}
for every $m$-tuple ${\bf a}\in A^m$, is said to be obtained from the $n$-ary function $f$ by \emph{simple variable substitution}.
Note that this subsumes cylindrification (addition of inessential variables), permutation of variables and
diagonalization (identification of variables), see e.g. \cite{M}, \cite{Pi1}, and \cite{Po1}.
 A class $\mathcal{K}$ of functions of several variables is said to be \emph{closed under simple variable substitutions}
if each function obtained from a function $f$ in $\mathcal{K}$ by simple variable substitution is also in $\mathcal{K}$.
Variable substitution plays a significant role in a number of studies of function classes and 
class definability (see e.g. \cite{WW, W, EFHH, F, Pi2, Z}).

For a positive integer $m$, an $m$\emph{-ary relation on $A$} is a subset $R$ of $A^m$.
For an $m$-tuple ${\bf a}$ we write $R({\bf a})$ if ${\bf a}\in R$.
An $m\times n$ matrix $M$ with entries in $A$ is thought of as an $n$-tuple of $m$-tuples,
$M=({\bf a}^1\ldots {\bf a}^n)$. The $m$-tuples ${\bf a}^1,\ldots ,{\bf a}^n$ are called  
\emph{columns} of $M$.  For $i\in m$,  the 
$n$-tuple $({\bf a}^1(i)\ldots {\bf a}^n(i))$ is called \emph{row} $i$ of $M$ .  
For a matrix $M$ with entries in $A$, we write $M\prec R$ if all columns of $M$ are in $R$.
 For an $n$-ary function $f\in B^{A^n}$ and an $m\times n$ matrix $M=({\bf a}^1\ldots {\bf a}^n)$,
 we denote by $fM$ the $m$-tuple $(f({\bf a}^1(i)\ldots {\bf a}^n(i))\mid i\in m)$ in $B^m$. 
Also, we denote by $fR$ the $m$-ary relation on $B$ given by 
 \begin{displaymath}
 fR=\{fM: \textrm{ $m\times n$ matrix } M\prec R\}. 
\end{displaymath}

\section{Classes of Functions of Several Variables Definable by Relational Constraints}

Consider arbitrary non-empty sets $A$ and $B$.
An $m$-ary \emph{$A$-to-$B$ relational constraint} (or simply, $m$-ary \emph{constraint},
 when the underlying sets are understood from the context)
  is an ordered pair $(R,S)$ where $R\subseteq A^m$ and $S\subseteq B^m$. 
The relations $R$ and $S$ are called \emph{antecedent} and \emph{consequent}, respectively,
 of the constraint.
 A function of several variables 
 $f:A^n\rightarrow B$, $n\geq 1$, is said to \emph{satisfy} an $m$-ary $A$-to-$B$ constraint $(R,S)$ 
if $fR\subseteq S$. For general background see \cite{Pi2}.

A class $\mathcal{K}\subseteq \cup _{n\geq 1}B^{A^n}$ of $B$-valued functions on $A$ is said 
to be \emph{definable} by a set $\mathcal{S}$ of $A$-to-$B$ constraints, if $\mathcal{K}$
 is the class of all functions which satisfy every member of $\mathcal{S}$.

A class $\mathcal{K}\subseteq \cup _{n\geq 1}B^{A^n}$ of $B$-valued functions on $A$ is said 
to be \emph{locally closed} if for every $B$-valued function $f$ of several variables on $A$ the following holds: 
if every restriction of $f$ to a finite subset of its domain $A^n$ coincides with a restriction of 
some member of $\mathcal{K}$, then $f$ belongs to $\mathcal{K}$.

\begin{theorem}
Consider arbitrary non-empty sets $A$ and $B$.
 For any class of functions $\mathcal{K}\subseteq \cup _{n\geq 1}B^{A^n}$ 
the following conditions are equivalent:
\begin{itemize}
\item[(i)] $\mathcal{K}$ is locally closed and it is closed under simple variable substitutions;
\item[(ii)] $\mathcal{K}$ is definable by some set of $A$-to-$B$ constraints.
\end{itemize}
\end{theorem}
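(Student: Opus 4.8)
The plan is to prove the two implications separately; (ii)$\Rightarrow$(i) is routine and I would dispatch it first, reserving the effort for (i)$\Rightarrow$(ii).

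For (ii)$\Rightarrow$(i) I would note that a definable class is an intersection of classes each of the form $\{f: fR\subseteq S\}$ for a single constraint $(R,S)$, and that both closure properties pass to arbitrary intersections; so it is enough to check them for one constraint. Closure under simple variable substitution would follow from the observation that if $g(\mathbf{a})=f(\mathbf{a}\circ l)$ with $l:n\to p$, then for any $m\times p$ matrix $M\prec R$ the tuple $gM$ equals $fM'$, where $M'$ is the $m\times n$ matrix whose $k$-th column is the $l(k)$-th column of $M$; since each column of $M'$ is a column of $M$, we have $M'\prec R$ and hence $gM\in fR\subseteq S$. Local closure would follow because the rows of any $M\prec R$ form a finite subset of $A^n$, on which $f$ must agree with some $h\in\mathcal{K}$, giving $fM=hM\in hR\subseteq S$.

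The real work is (i)$\Rightarrow$(ii). I would take $\mathcal{S}$ to be the set of all $A$-to-$B$ constraints satisfied by every member of $\mathcal{K}$, and let $\mathcal{K}^{*}$ be the class it defines; the inclusion $\mathcal{K}\subseteq\mathcal{K}^{*}$ is immediate, and I would prove $\mathcal{K}^{*}\subseteq\mathcal{K}$ contrapositively, by separating any $f\notin\mathcal{K}$ from $\mathcal{K}$ with a single constraint. Contraposing local closure yields a finite (nonempty) set $F=\{\mathbf{a}^1,\ldots,\mathbf{a}^p\}\subseteq A^n$ on which $f$ disagrees with every arity-$n$ member of $\mathcal{K}$. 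Letting $M_F$ be the $p\times n$ matrix with rows $\mathbf{a}^1,\ldots,\mathbf{a}^p$, I would take the antecedent $R\subseteq A^p$ to be the set of columns of $M_F$ and the consequent $S=\{hM_F: h\in\mathcal{K}\ \text{of arity}\ n\}\subseteq B^p$. By the choice of $F$ we have $fM_F\in fR$ but $fM_F\notin S$, so $f$ violates $(R,S)$.

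The step I expect to be the main obstacle is verifying that \emph{every} $g\in\mathcal{K}$, of arbitrary arity $q$, satisfies this single $p$-ary constraint, since $S$ was defined using only the arity-$n$ functions in $\mathcal{K}$. This is exactly where closure under simple variable substitutions is indispensable: any $p\times q$ matrix $M\prec R$ has all its columns among the $n$ columns of $M_F$, so it is a column-reindexing of $M_F$ by some $\sigma:q\to n$, and a short computation gives $gM=g^{\sigma}M_F$, where $g^{\sigma}(\mathbf{x})=g(\mathbf{x}\circ\sigma)$ is a simple variable substitution of $g$, hence an arity-$n$ member of $\mathcal{K}$. Thus $gM\in S$, so $gR\subseteq S$. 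Together with the failure of $(R,S)$ for $f$ this gives $f\notin\mathcal{K}^{*}$, completing the reverse inclusion. The essential point for the infinite case is that no finiteness of $A$ or $B$ is needed: local closure is a compactness condition that restricts attention to finite $F$, so the separating constraint always has finite arity $p=|F|$.
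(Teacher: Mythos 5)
Your proof is correct and takes essentially the same route as the paper's: the identical Geiger--Pippenger construction (antecedent $R$ = columns of the matrix $M_F$ whose rows are the finite witness set $F$ from contraposing local closure, consequent $S=\{hM_F : h\in\mathcal{K},\ h\ n\text{-ary}\}$), with the (ii)$\Rightarrow$(i) direction argued the same way up to contraposition. The only difference is expository: you spell out the column-reindexing computation $gM=g^{\sigma}M_F$ showing that members of $\mathcal{K}$ of arbitrary arity satisfy the constraint, a step the paper leaves implicit in the phrase ``since $\mathcal{K}$ is closed under simple variable substitutions.''
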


\begin{proof}
$(ii)\Rightarrow (i)$: As observed in the finite case by Pippenger in \cite{Pi2}, it is easy to see, also in general,
 that if a function $f$ satisfies a constraint $(R,S)$ then every   
function obtained from $f$ by simple variable substitution also satisfies $(R,S)$. Thus,
any function class $\mathcal{K}$ definable by a set of constraints is closed under 
simple variable substitutions.

 To show that $\mathcal{K}$ is locally closed, consider $f\not\in \mathcal{K}$ 
and let $(R,S)$ be a $A$-to-$B$ constraint that is not satisfied by $f$ but
  satisfied by every function $g$ in $\mathcal{K}$. Thus for some matrix $M\prec R$, 
 $fM\not\in S$ but $gM\in S$ for every $g\in \mathcal{K}$.
 So there is a finite restriction of $f$, namely its restriction to the set of rows of $M$,
 which does not coincide with that of any member of $\mathcal{K}$.

$(i)\Rightarrow (ii)$: We need to show that, for every function $g$ not in $\mathcal{K}$,
there is a $A$-to-$B$ constraint $(R,S)$ such that:
\begin{itemize}
\item[a)] every $f$ in $\mathcal{K}$ satisfies $(R,S)$
\item[b)] $g$ does not satisfy $(R,S)$
\end{itemize}

The case $\mathcal{K}=\emptyset $ being trivial, assume that $\mathcal{K}$ is non-empty.
Suppose that $g$ is $n$-ary. Since $g\not\in \mathcal{K}$,
 there is a finite restriction $g_F$ of $g$ to a finite subset $F\subseteq A^n$
 such that $g_F$ disagrees with every function in $\mathcal{K}$ restricted to $F$. Clearly, $F$ is non empty.
So let $M$ be a $\mid F\mid \times n$ matrix whose rows are the various $n$-tuples in $F$. 
Following Geiger's strategy, also used by Pippenger, define $R$ to be the set of columns of $M$ and
 let $S=\{fM:f\in \mathcal{K}, f$ $n$-ary$\}$.
It is clear from the above construction that $(R,S)$ is an $A$-to-$B$ constraint, and,
 since $\mathcal{K}$ is closed under simple variable substitutions, 
every function in $\mathcal{K}$ satisfies $(R,S)$.
 Also, $g_F$ does not satisfy $(R,S)$, therefore $g$ does not satisfy $(R,S)$ either. 
Thus, conditions a) and b) hold for the constraint $(R,S)$. 
\end{proof}

This generalizes the characterization of closed classes of functions given by Pippenger in \cite{Pi2} 
 by allowing both finite and infinite underlying sets.

\section{Sets of Relational Constraints Characterized by Functions}

The following constructions on maps will be needed.

For maps $f:A\rightarrow B$ and $g:C\rightarrow D$, the composition $g\circ f$ is defined only if
$B=C$. Removing this restriction we define the \emph{concatenation} of $f$ and $g$, denoted simply
 $gf$, to be the map with domain $f^{-1}[B\cap C]$ and codomain $D$ given by 
$(gf)(a)=g(f(a))$ for all $a\in f^{-1}[B\cap C]$. Clearly, if $B=C$ then $gf=g\circ f$, thus concatenation
subsumes and extends functional composition. Concatenation is associative, i.e. for any maps $f$, 
$g$, $h$ we have $h(gf)=(hg)f$.

Given a family $(g_i)_{i\in I}$ of maps, $g_i:A_i\rightarrow B_i$ such that $A_i\cap A_j=\emptyset $
whenever $i\not=j$, we call (\emph{piecewise}) \emph{sum of the family} $(g_i)_{i\in I}$, denoted 
${\Sigma }_{i\in I}g_i$, the map from ${\cup }_{i\in I}A_i$ to ${\cup }_{i\in I}B_i$ whose restriction
to each $A_i$ agrees with $g_i$. If $I$ is a two-element set, say $I=\{1,2\}$, then we write $g_1+g_2$.
Clearly, this operation is associative and commutative.

The operations of concatenation and summation are linked by distributivity, i.e. for any family $(g_i)_{i\in I}$ of maps on disjoint domains
and any map $f$
 \begin{displaymath}
({\Sigma }_{i\in I}g_i)f={\Sigma }_{i\in I}(g_if) \qquad \textrm{ and } \qquad 
f({\Sigma }_{i\in I}g_i)={\Sigma }_{i\in I}(fg_i). 
\end{displaymath} 
In particular, if $g$ and $g'$ are maps with disjoint domains, then 
\begin{displaymath}
(g+g')f=(gf)+(g'f) \qquad \textrm{ and } \qquad 
f(g+g')=(fg)+(fg'). 
\end{displaymath}

Let $g_1,\ldots ,g_n$ be maps from $A$ to $B$. 
The $n$-tuple $(g_1\ldots g_n)$ determines a \emph{vector-valued map} $g:A\rightarrow B^n$, given by
$g(a)=(g_1(a)\ldots g_n(a))$ for every $a\in A$. If $f$ is an $n$-ary $C$-valued function on $B$ then 
the composition $f\circ g$ is a map from $A$ to $C$, it is traditionally denoted by $f(g_1\ldots g_n)$ 
and called the \emph{composition of $f$ with $g_1,\ldots ,g_n$}. Suppose now that $A\cap A'=\emptyset $ and    
${g'}_1,\ldots ,{g'}_n$ are maps from $A'$ to $B$. Letting $g$ and $g'$ be the vector-valued maps 
determined by $(g_1\ldots g_n)$ and $({g'}_1\ldots {g'}_n)$, respectively, we have that
$f(g+g')=(fg)+(fg')$, i.e.  
\begin{displaymath}
f((g_1+{g'}_1)\ldots (g_n+{g'}_n))=f(g_1\ldots g_n)+f({g'}_1\ldots {g'}_n).   
\end{displaymath} 

For $B\subseteq A$, ${\iota }_{AB}$ denotes the canonical injection (inclusion map) from $B$ to $A$.
Thus the restriction $f\mid _B$ of any map $f:A\rightarrow C$
 to the subset $B$ is given by $f\mid _B=f{\iota }_{AB}$.

To discuss closed sets of constraints we need the following concepts.

We denote the binary equality relation on a set $A$ by $=_A$. The \emph{binary $A$-to-$B$ equality constraint} is
$(=_A,=_B)$.
A constraint $(R,S)$ is called the \emph{empty constraint} if both antecedent and consequent are empty.
For every $m\geq 1$, the constraints $(A^m,B^m)$ are said to be \emph{trivial}.
Note that every $B$-valued function on $A$ satisfies each of these constraints.

A constraint $(R,S)$ is said to be obtained from $(R_0,S_0)$ by \emph{restricting the antecedent} 
if $R\subseteq R_0$ and $S=S_0$.
Similarly, a constraint $(R,S)$ is said to be obtained from $(R_0,S_0)$ by \emph{extending the consequent} 
if $S\supseteq S_0$ and $R=R_0$. 
If a  constraint $(R,S)$ is obtained from $(R_0,S_0)$ by restricting the antecedent or extending 
the consequent or a combination of the two (i.e. $R\subseteq R_0$ and $S\supseteq S_0$)
we say that $(R,S)$ is a \emph{relaxation} of $(R_0,S_0)$.
Given a non-empty family of constraints $(R,S_j)_{j\in J}$ of the same arity (and antecedent), 
 the constraint $(R,\cap _{j\in J} S_j)$ is said to be obtained from $(R,S_j)_{j\in J}$ 
by \emph{intersecting consequents}.

The above operations were introduced by Pippenger in \cite{Pi2}
 in the context of finite sets, together with the notion of "simple minors".
We propose a minor formation concept which extends and subsumes these operations.
This concept is closely related to the construction of relations via the "formula schemes"
of Szab\' o (see \cite{Sz}) and the "general superpositions" of P\" oschel  
(see e.g. \cite{Po2}, \cite{Po3}). We shall discuss this relationship in Section 5.

Let $m$ and $n$ be positive integers (viewed as ordinals, i.e., $m=\{0,\ldots ,m-1\}$).
Let $h:n\rightarrow m\cup V$ where
$V$ is an arbitrary set of symbols disjoint from the ordinals 
called "\emph{existentially quantified indeterminate indices}", or simply \emph{indeterminates},
 and $\sigma :V\rightarrow A$ any map called a \emph{Skolem map}.
Then each $m$-tuple ${\bf a}\in A^m$, being a map ${\bf a}:m\rightarrow A$,
gives rise to an $n$-tuple $({\bf a}+\sigma )h\in A^n$.

Let $H=(h_j)_{j\in J}$ be a non-empty family of maps $h_j:n_j\rightarrow m\cup V$, where each $n_j$ is a positive integer
(recall $n_j=\{0,\ldots ,n_j-1\}$). Then $H$ is called a \emph{minor formation scheme} with \emph{target} $m$,
 \emph{indeterminate set} $V$ and \emph{source family} $(n_j)_{j\in J}$.
Let $(R_j)_{j\in J}$ be a family of relations (of various arities) on the same set $A$, each $R_j$ of arity $n_j$,
and let $R$ be an $m$-ary relation on $A$. We say that $R$ is a  
 \emph{restrictive conjunctive minor} of the family $(R_j)_{j\in J}$ \emph{via $H$},
or simply a \emph{restrictive conjunctive  minor} of the family $(R_j)_{j\in J}$, if 
for every $m$-tuple $\bf a$ in $A^m$, the condition $R({\bf a})$  
implies that there is a Skolem map $\sigma :V\rightarrow A$ such that, for all $j$ in $J$, we have
$R_j[({\bf a}+\sigma )h_j]$.
On the other hand, if for every $m$-tuple $\bf a$ in $A^m$, the condition $R({\bf a})$ holds whenever
there is a Skolem map $\sigma :V\rightarrow A$ such that, for all $j$ in $J$, we have 
$R_j[({\bf a}+\sigma )h_j]$, then
we say that $R$ is an \emph{extensive conjunctive minor} of the family $(R_j)_{j\in J}$ \emph{via $H$},
or simply an \emph{extensive conjunctive minor} of the family $(R_j)_{j\in J}$.
If $R$ is both a restrictive conjunctive minor and 
an extensive conjunctive minor of the family $(R_j)_{j\in J}$ via $H$, 
then $R$ is said to be a \emph{tight conjunctive minor} of the family $(R_j)_{j\in J}$ \emph{via $H$},
or \emph{tight conjunctive minor} of the family. Note that given a scheme $H$ and a family $(R_j)_{j\in J}$,
 there is a unique tight conjunctive minor of the family $(R_j)_{j\in J}$ via $H$.

If $(R_j,S_j)_{j\in J}$ is a family of $A$-to-$B$ constraints (of various arities) and $(R,S)$ is an 
$A$-to-$B$ constraint such that for a scheme $H$
\begin{itemize}
\item[(i)] $R$ is a restrictive conjunctive minor of $(R_j)_{j\in J}$ via $H$, 
\item[(ii)] $S$ is an extensive conjunctive minor of $(S_j)_{j\in J}$ via $H$, 
\end{itemize}
then $(R,S)$ is said to be a \emph{conjunctive minor} of the family $(R_j,S_j)_{j\in J}$ \emph{via $H$},
or simply a \emph{conjunctive minor} of the family of constraints.

If both $R$ and $S$ are tight conjunctive minors of the respective families via $H$, the constraint   
$(R,S)$ is said to be a \emph{tight conjunctive minor} of the family $(R_j,S_j)_{j\in J}$ \emph{via $H$}, 
or simply a \emph{tight conjunctive minor} of the family of constraints.
Note that given a scheme $H$ and a family $(R_j,S_j)_{j\in J}$, 
 there is a unique tight conjunctive minor of the family via the scheme $H$.

An important particular case of tight conjunctive minors is when the minor formation scheme $H=(h_j)_{j\in J}$ 
and the family $(R_j,S_j)_{j\in J}$  are indexed by a singleton $J=\{0\}$. 
In this case, a tight conjunctive minor $(R,S)$ of a family containing a single constraint $(R_0,S_0)$ 
is called a \emph{simple minor} of $(R_0,S_0)$ according to the concept introduced by Pippenger in 
\cite{Pi2}.

\begin{lemma}
Let $(R,S)$ be a conjunctive minor of a non-empty family $(R_j,S_j)_{j\in J}$ of $A$-to-$B$ constraints.
If $f:A^n\rightarrow B$ satisfies every $(R_j,S_j)$ then $f$ satisfies $(R,S)$.
\end{lemma}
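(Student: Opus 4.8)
The plan is to unwind the definition of satisfaction directly. To show $f$ satisfies $(R,S)$ I must check that $fR \subseteq S$, so I would fix an arbitrary $m \times n$ matrix $M = (\mathbf{a}^1 \ldots \mathbf{a}^n)$ with $M \prec R$ and prove $fM \in S$. Since $S$ is an extensive conjunctive minor of $(S_j)_{j \in J}$ via the scheme $H$, it suffices to exhibit a single Skolem map $\tau : V \to B$ such that $S_j[(fM + \tau)h_j]$ holds for every $j \in J$. The whole argument therefore reduces to producing this $\tau$ and verifying those memberships.

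First I would feed the columns of $M$ through the restrictive side. Each column $\mathbf{a}^k$ lies in $R$, so $R(\mathbf{a}^k)$ holds, and because $R$ is a restrictive conjunctive minor there is a Skolem map $\sigma_k : V \to A$ with $R_j[(\mathbf{a}^k + \sigma_k)h_j]$ for all $j$. For each fixed $j$ I would then assemble the $n_j \times n$ matrix $M'_j$ whose $k$-th column is $(\mathbf{a}^k + \sigma_k)h_j$; the displayed condition says precisely that every column of $M'_j$ lies in $R_j$, i.e. $M'_j \prec R_j$. Applying the hypothesis that $f$ satisfies $(R_j, S_j)$ then gives $fM'_j \in S_j$.

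The crux is to recognize what $fM'_j$ actually is. Computing its $t$-th entry (for $t \in n_j$) amounts to applying $f$ to row $t$ of $M'_j$, namely $f\big((\mathbf{a}^1+\sigma_1)(h_j(t)) \ldots (\mathbf{a}^n+\sigma_n)(h_j(t))\big)$. When $h_j(t) = i \in m$ this collapses to $f(\mathbf{a}^1(i) \ldots \mathbf{a}^n(i)) = (fM)(i)$; when $h_j(t) = v \in V$ it equals $f(\sigma_1(v) \ldots \sigma_n(v))$. This forces the correct definition of the Skolem map: set $\tau(v) = f(\sigma_1(v) \ldots \sigma_n(v))$. With this choice the two cases combine into $fM'_j = (fM + \tau)h_j$, so $fM'_j \in S_j$ reads exactly as $S_j[(fM + \tau)h_j]$, for every $j$. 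Extensiveness of $S$ then yields $fM \in S$, and since $M \prec R$ was arbitrary, $fR \subseteq S$.

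I expect the only genuine obstacle to be the mismatch between the antecedent and consequent sides: restrictiveness supplies a possibly different Skolem map $\sigma_k$ for each of the $n$ columns, whereas extensiveness demands one uniform map $\tau$ on the $B$-side. The resolution --- and the reason the asymmetric restrictive/extensive definitions are exactly the right ones --- is that $f$ absorbs the column-wise variation, so $\tau(v) = f(\sigma_1(v) \ldots \sigma_n(v))$ packages the $n$ maps $\sigma_k$ into a single $B$-valued witness. Once this is noticed, the remaining verifications are a routine case split on whether $h_j(t)$ is an ordinary index or an indeterminate.
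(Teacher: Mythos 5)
Your proof is correct and takes essentially the same route as the paper's: the paper likewise picks one Skolem map $\sigma_i$ per column of $M$, defines the $B$-side witness as $\sigma = f(\sigma_1\ldots\sigma_n)$ (your $\tau$), and concludes $fM_j\in S_j$ from the hypothesis on $f$. The only difference is presentational --- where you verify $fM'_j=(fM+\tau)h_j$ by a pointwise case split on whether $h_j(t)$ lands in $m$ or in $V$, the paper gets the same identity in one line from the concatenation/sum distributivity rules of Section 3.
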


\begin{proof} Let $(R,S)$ be an $m$-ary conjunctive minor of the family $(R_j,S_j)_{j\in J}$ via the scheme  
$H=(h_j)_{j\in J}$, $h_j:n_j\rightarrow m\cup V$. Let $M=({\bf a}^1\ldots {\bf a}^n)$ 
be an $m\times n$ matrix with columns in $R$. 
We need to prove that the $m$-tuple $fM$ belongs to $S$. 
Note that the $m$-tuple $fM$, being a map defined on $m$, is in fact the composition of $f$ with the 
 $m$-tuples ${\bf a}^1,\ldots ,{\bf a}^n$, i.e. $fM=f({\bf a}^1\ldots {\bf a}^n)$.   
Since $R$ is a restrictive conjunctive minor of $(R_j)_{j\in J}$ via $H=(h_j)_{j\in J}$,
 there are Skolem maps ${\sigma }_i:V\rightarrow A$, $1\leq i\leq n$, such that for every $j$ in $J$,
 for the matrix $M_j=(({\bf a}^1+\sigma _1)h_j\ldots ({\bf a}^n+\sigma _n)h_j)$ 
we have $M_j\prec R_j$.

Since $S$ is an extensive conjunctive minor of $(S_j)_{j\in J}$ via the same scheme $H=(h_j)_{j\in J}$,
to prove that $fM$ is in $S$, 
it suffices to give a Skolem map $\sigma :V\rightarrow B$ such that, for all $j$ in $J$, the $n_j$-tuple
$(fM+\sigma )h_j$ belongs to $S_j$.
Let $\sigma =f({\sigma }_1\ldots {\sigma }_n)$.
By the rules discussed at the begining of this Section, we have that for each $j$ in $J$,
\begin{displaymath}
(fM+\sigma )h_j=[f({\bf a}^1\ldots {\bf a}^n)+f({\sigma }_1\ldots {\sigma }_n)]h_j= 
[f(({\bf a}^1+{\sigma }_1)\ldots ({\bf a}^n+{\sigma }_n))]h_j= \\
\end{displaymath} 
\begin{displaymath}
=f[({\bf a}^1+{\sigma }_1)h_j\ldots ({\bf a}^n+{\sigma }_n)h_j]=fM_j
\end{displaymath} 
Since $f$ satisfies $(R_j,S_j)$, we have $fM_j\in S_j$.
\end{proof}

We say that a class $\mathcal{T}$ of relational constraints is 
\emph{closed under formation of conjunctive minors}
if whenever every member of the non-empty family $(R_j,S_j)_{j\in J}$ of constraints
 is in $\mathcal{T}$, all conjunctive minors of the family $(R_j,S_j)_{j\in J}$ are also in $\mathcal{T}$.

The formation of conjunctive minors subsumes the formation of simple minors as well as the operations of
restricting antecedents, extending consequents and intersecting consequents.
 Simple minors in turn subsume permutation, identification,
projection and addition of dummy arguments (see Pippenger \cite{Pi2}).

In analogy with locally closed function classes,
we say that a set $\mathcal{T}$ of relational constraints is \emph{locally closed} if 
for every $A$-to-$B$ constraint $(R,S)$ the following holds: 
if every relaxation of $(R,S)$ with finite antecedent coincides with 
some member of $\mathcal{T}$, then $(R,S)$ belongs to $\mathcal{T}$.

A set $\mathcal{T}$ of $A$-to-$B$ constraints is said to be \emph{characterized} by a set $\mathcal{F}$
of $B$-valued functions on $A$ if $\mathcal{T}$ is the set of all those constraints
which are satisfied by every member of $\mathcal{F}$.

\begin{theorem}
Consider arbitrary non-empty sets $A$ and $B$.
Let $\mathcal{T}$ be a set of $A$-to-$B$ relational constraints. Then the following are equivalent:
\begin{itemize}
\item[(i)]$\mathcal{T}$ is locally closed and contains the binary equality constraint,
 the empty constraint, and it is closed under formation of conjunctive minors;
\item[(ii)]$\mathcal{T}$ is characterized by some set of $B$-valued functions on $A$.
\end{itemize}
\end{theorem}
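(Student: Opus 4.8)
The plan is to prove $(ii)\Rightarrow(i)$ by direct verification and to obtain $(i)\Rightarrow(ii)$ by a separation argument dual to the one used in Theorem~1. For $(ii)\Rightarrow(i)$, suppose $\mathcal{T}$ is characterized by a set $\mathcal{F}$ of functions. Closure under formation of conjunctive minors is immediate from Lemma~1. That $\mathcal{T}$ contains the equality and empty constraints follows by inspection: every column of a matrix $M\prec{=_A}$ is a constant pair, so the two rows of $M$ coincide and $fM$ is a constant pair in $=_B$, while $f\emptyset=\emptyset$ vacuously. For local closure I would argue contrapositively: if $(R,S)\notin\mathcal{T}$ then some $f\in\mathcal{F}$ violates it, i.e. $fM\notin S$ for some $M\prec R$; taking $R_0$ to be the finite set of columns of $M$, the relaxation $(R_0,S)$ has finite antecedent and is still violated by $f$, so $(R_0,S)\notin\mathcal{T}$, which is exactly what local closure demands.

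For $(i)\Rightarrow(ii)$, let $\mathcal{F}$ be the set of all $B$-valued functions on $A$ satisfying every constraint in $\mathcal{T}$, and let $\mathcal{T}'$ be the set of constraints characterized by $\mathcal{F}$. Then $\mathcal{T}\subseteq\mathcal{T}'$ trivially, and it remains to prove $\mathcal{T}'\subseteq\mathcal{T}$. Given $(R,S)\in\mathcal{T}'\setminus\mathcal{T}$, I would first reduce to a finite antecedent: since $(R,S)\notin\mathcal{T}$, local closure of $\mathcal{T}$ yields a relaxation $(R_0,S_0)$ with $R_0\subseteq R$ finite that is not in $\mathcal{T}$; as $\mathcal{T}'$ is closed under relaxations by Lemma~1, this $(R_0,S_0)$ still lies in $\mathcal{T}'$, so I may replace $(R,S)$ by it and assume $R$ finite.

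With $R$ finite I would enumerate its elements as the columns of an $m\times n$ matrix $M$, where $n=\mid R\mid$. Next I would check that the trivial constraints lie in $\mathcal{T}$: a scheme consisting of a single map $h:2\rightarrow m\cup V$ that sends one argument to an indeterminate exhibits $(A^m,B^m)$ as a tight conjunctive minor of the equality constraint, so $(A^m,B^m)\in\mathcal{T}$, and restricting the antecedent gives $(R,B^m)\in\mathcal{T}$. Thus the family $\{S':(R,S')\in\mathcal{T}\}$ is non-empty, and since intersecting (even infinitely many) consequents is the tight conjunctive minor along the scheme of identity maps, the constraint $(R,\hat{S})$ with $\hat{S}=\bigcap\{S':(R,S')\in\mathcal{T}\}$ again lies in $\mathcal{T}$ and is the least consequent pairing with $R$ inside $\mathcal{T}$. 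Because extending the consequent preserves membership in $\mathcal{T}$, the assumption $(R,S)\notin\mathcal{T}$ forces $\hat{S}\not\subseteq S$, so I may fix some ${\bf s}\in\hat{S}\setminus S$.

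Everything then reduces to a single \emph{realization} statement: $\hat{S}=\{fM:f\in\mathcal{F}\}$ (with $f$ taken $n$-ary). The inclusion $\supseteq$ is clear, since each $f\in\mathcal{F}$ satisfies $(R,\hat{S})$, and, using that $\mathcal{F}$ is closed under simple variable substitutions (by Theorem~1), $(R,\{fM:f\in\mathcal{F}\})$ is satisfied by all of $\mathcal{F}$. Granting the reverse inclusion, the chosen ${\bf s}$ equals $fM$ for some $f\in\mathcal{F}$, whence $f$ violates $(R,S)$ while lying in $\mathcal{F}$, contradicting $(R,S)\in\mathcal{T}'$. The hard part will be exactly this realization: producing, for a prescribed ${\bf s}\in\hat{S}$, an actual function satisfying \emph{every} constraint of $\mathcal{T}$ and sending each row $i$ of $M$ to $s_i$. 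Following Geiger's and Pippenger's strategy I would build $f$ from the rows of $M$; this assignment is well defined because any coincidence of rows of $M$ is a coincidence of all columns, so $R$ sits inside a diagonal antecedent and a simple minor of the equality constraint forces $\hat{S}$ into the corresponding diagonal of $B^m$. The main obstacle is then to verify that the function so defined meets an arbitrary $(P,Q)\in\mathcal{T}$, where the membership ${\bf s}\in\hat{S}$ together with closure of $\mathcal{T}$ under conjunctive minors must be combined to force each test evaluation $fN$, $N\prec P$, into $Q$.
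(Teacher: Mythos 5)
Your direction $(ii)\Rightarrow(i)$ and your reduction steps for $(i)\Rightarrow(ii)$ (pass to a finite-antecedent relaxation via local closure, note $(A^m,B^m)\in\mathcal{T}$ as a minor of the equality constraint, form the least consequent $\hat{S}=\bigcap\{S':(R,S')\in\mathcal{T}\}$ by intersecting consequents, and fix ${\bf s}\in\hat{S}\setminus S$) are all correct and run parallel to the paper. But the proposal has a genuine gap at exactly the point you flag as ``the hard part'': the realization statement $\hat{S}=\{fM: f\in\mathcal{F}\}$ is essentially equivalent to the hard direction of the theorem, and the sketch you give for it cannot work as stated. Defining $f$ on the rows of $M$ only determines $f$ on finitely many points of $A^n$ (the matrix $M$ has $m$ rows, $m$ finite), and a membership ${\bf s}\in\hat{S}$ at the finite arity $m$ carries no information whatsoever about how to choose the values of $f$ on the rest of $A^n$; yet the verification against an arbitrary $(P,Q)\in\mathcal{T}$ involves test matrices $N\prec P$ whose rows range over all of $A^n$. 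In the finite case Geiger and Pippenger close this gap by successively extending the partial function, but that strategy does not transfer to infinite $A$ and $B$ --- indeed the paper explicitly notes that its construction \emph{differs} from Geiger's and Pippenger's in that the separating function is defined at once as a total function, and Section 4 (the extensible family of finite injective partial functions with $card(A)>card(B)$) shows how badly partial-function satisfaction can diverge from total-function satisfaction in the infinite setting.

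The paper's resolution requires machinery your plan lacks: it extends constraints and minor formation schemes to infinite arities, proves a composite-scheme lemma (Claim 1, transitivity of conjunctive $\infty$-minor formation) and Fact 1 (the finitary part of the $\infty$-minor closure $\mathcal{T}^{\infty}$ is $\mathcal{T}$), then takes a matrix $M$ whose rows are \emph{all} $n$-tuples of $A^n$ --- of possibly infinite row ordinal $\mu$ --- and chooses the tuple ${\bf s}\in B^{\mu}$ at arity $\mu$, not $m$, with the stronger property that $(R_M, B^{\mu}\setminus\{{\bf s}\})\notin\mathcal{T}^{\infty}$ (such ${\bf s}$ exists, else intersecting all such consequents would put $(R_M,S_M)$ in $\mathcal{T}^{\infty}$, contradicting Fact 1). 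That single infinitary choice fixes the entire graph of $g$ via $gM={\bf s}$, and the verification that $g$ satisfies every member of $\mathcal{T}$ then goes through simple $\infty$-minors and relaxations inside $\mathcal{T}^{\infty}$, using Claim 1. Your finite-arity ${\bf s}\in\hat{S}\setminus S$ is the shadow of this choice, but without lifting it to arity $\mu$ (or substituting some other compactness device, which you do not supply), the announced ``main obstacle'' is not an obstacle you have a route around --- it is the theorem itself. To repair the proposal you would need to either import the paper's $\infty$-minor apparatus or give an independent argument that a partial function realizing ${\bf s}$ on the rows of $M$ extends to a total function satisfying all of $\mathcal{T}$, and the latter is precisely what fails naively when $A$ is infinite.
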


\begin{proof} First we prove the implication $(ii)\Rightarrow (i)$. 
It is clear that every function on $A$ to $B$ 
satisfies the empty and the equality constraints. 
It follows from Lemma 1 that if a function satisfies 
a non-empty family $(R_j,S_j)_{j\in J}$ of constraints then it satisfies every conjunctive minor of the family.
Thus, to prove the implication \emph{(ii) $\Rightarrow $(i)} we only need to 
show that $\mathcal{T}$ is locally closed.
 For that, let $(R,S)$ be an $m$-ary constraint not in $\mathcal{T}$.  
 By $(ii)$, there is an $n$-ary function $f$ satisfying every constraint in 
$\mathcal{T}$ which does not satisfy $(R,S)$. 
Thus, for an $m\times n$ matrix $M\prec R$, $fM\not\in S$. It is easy to see that the constraint $(F,S)$,
 where $F$ is the set of columns of $M$, is a relaxation of $(R,S)$ with finite antecedent
such that $(F,S)\not\in \mathcal{T}$. This completes the proof of implication  $(ii)\Rightarrow (i)$.

To prove the implication $(i)\Rightarrow (ii)$, we need to extend the concepts of relation 
and constraint to infinite arities.
Function arities remain finite. These extended definitions have no bearing on the Theorem itself, 
but are needed only as tools in its proof.

For any non-zero, possibly infinite, ordinal $m$, an \emph{$m$-tuple} is a map defined on $m$.
(An ordinal $m$ is the set of lesser ordinals.) 
Relation and constraint arities are thus allowed to be arbitrary non-zero, possibly infinite,
 ordinals $m,n,\mu $ etc. In minor formation schemes, the target $m$ and the members $n_j$ of the source family
 are also allowed to be arbitrary non-zero, possibly infinite ordinals. For relations, 
we shall use the term \emph{restrictive conjunctive $\infty $-minor} 
(\emph{extensive conjunctive $\infty $-minor}) to indicate a restrictive conjunctive minor
(extensive conjunctive minor, respectively) via a scheme whose target and
source ordinals may be infinite or finite. Similarly, for constraints
we shall use the term \emph{conjunctive $\infty $-minor} 
(\emph{simple $\infty $-minor}) to indicate a conjunctive minor
(simple minor, respectively) via a scheme whose target and
source ordinals may be infinite or finite. 
Thus in the sequel the use of the term "minor" without the prefix "$\infty $-" 
continues to mean the respective minor via 
a scheme whose target and
source ordinals are all finite.
 Matrices can also have infinitely many rows but only finitely many columns:
an $m\times n$ matrix $M$, where $n$ is finite but $m$ could be infinite, 
is an $n$-tuple of $m$-tuples $M=({\bf a}^1\ldots {\bf a}^n)$.

In order to discuss the formation of repeated $\infty $-minors, we need the following definition.
Let $H=(h_j)_{j\in J}$ be a minor formation scheme with target $m$, indeterminate set $V$ and source family 
$(n_j)_{j\in J}$, and, for each $j\in J$, let $H_j=(h_{j}^i)_{j\in J, i\in I_j}$ be a scheme
  with target $n_j$, indeterminate set $V_j$ and source family 
$(n_{j}^i)_{ i\in I_j}$.
Assume that $V$ is disjoint from the $V_j$'s, and for distinct $j$'s the $V_j$'s are also pairwise disjoint.
Then the \emph{composite scheme} $H(H_j : j\in J)$ is the scheme $K=(k_{j}^i)_{j\in J, i\in I_j}$ 
defined as follows:
\begin{itemize}
\item[(i)] the target of $K$ is the target $m$ of $H$, 
\item[(ii)] the source family of $K$ is $(n_{j}^i)_{j\in J, i\in I_j}$, 
\item[(iii)] the indeterminate set of $K$ is $U=V\cup ({\cup }_{j\in J}V_j)$,
\item[(iv)] $k_{j}^i:n_{j}^i\rightarrow m\cup U$ is defined by
\begin{displaymath}
k_{j}^i=(h_j+\iota _{UV_j})h_{j}^i  
 \end{displaymath} 
  where $\iota _{UV_j}$ is the canonical injection (inclusion map) from $V_j$ to $U$.
\end{itemize}

\emph{Claim 1.} If $(R,S)$ is a conjunctive $\infty $-minor of a non-empty family $(R_j,S_j)_{j\in J}$ 
of $A$-to-$B$ constraints 
via the scheme $H$, and, for each $j\in J$, $(R_j,S_j)$ is a conjunctive $\infty $-minor of a non-empty family 
$(R_{j}^i,S_{j}^i)_{i\in I_j}$ via the scheme $H_j$,
then $(R,S)$ is a conjunctive $\infty $-minor of the non-empty family $(R_{j}^i,S_{j}^i)_{j\in J,i\in I_j}$
 via the composite scheme $K=H(H_j : j\in J)$.
  
{\sl Proof of Claim 1.} First, we need to see that $R$ is a restrictive conjunctive $\infty $-minor of the family
 $(R_{j}^i)_{j\in J,i\in I_j}$ via $K$.
Let ${\bf a}$ be an $m$-tuple in $R$.
This implies that there is a Skolem map $\sigma :V\rightarrow A$ such that for all $j$ in $J$,
we have $({\bf a}+\sigma )h_j\in R_j$.
In turn this implies that for every $j$ in $J$ 
there are Skolem maps ${\sigma }_j:V_j\rightarrow A$ such that for every $i$ in $I_j$, the $n_{j}^i$-tuple 
$[({\bf a}+\sigma )h_j+\sigma _j]h_{j}^i$ is in $R_{j}^i$.
Define the Skolem map $\tau :U\rightarrow A$ by $\tau =\sigma +{\Sigma }_{l\in J}{\sigma }_l$.
Then for every $j\in J$ and $i\in I_j$, we have $({\bf a}+\tau ){k_{j}^i}\in R_{j}^i$ because 
\begin{displaymath}
  ({\bf a}+\tau ){k_{j}^i}=({\bf a}+\sigma +{\Sigma }_{l\in J}{\sigma }_l)(h_j+\iota _{UV_j})h_{j}^i=
 \end{displaymath} 
\begin{displaymath}
  =[({\bf a}+\sigma )h_j+({\Sigma }_{l\in J}{\sigma }_l)h_j+({\bf a}+\sigma )
\iota _{UV_j}+({\Sigma }_{l\in J}{\sigma }_l)\iota _{UV_j}]h_{j}^i
=[({\bf a}+\sigma )h_j+\sigma _j]h_{j}^i \qquad (1)
 \end{displaymath} 
and this $n_{j}^i$-tuple is in $R_{j}^i$.

Second, we need to see that $S$ is an extensive conjunctive $\infty $-minor of 
the family $(S_{j}^i)_{j\in J,i\in I_j}$ via $K$. 
Take an $m$-tuple ${\bf b}\in B^m$ and assume that there is a Skolem map $\tau :U\rightarrow B$
such that for every $j\in J$ and $i\in I_j$, the $n_{j}^i$-tuple $({\bf b}+\tau ){k_{j}^i}$ is in $S_{j}^i$.
We need to show that $\bf b$ is in $S$. 
Define the Skolem maps $\sigma :V\rightarrow B$ and 
${\sigma }_j:V_j\rightarrow B$ for every $j\in J$, by restriction of $\tau $,
i.e. $\tau =\sigma +{\Sigma }_{l\in J}{\sigma }_l$.
Similarly to $(1)$, 
\begin{displaymath}
  ({\bf b}+\tau ){k_{j}^i}
=[({\bf b}+\sigma )h_j+\sigma _j]h_{j}^i. 
 \end{displaymath} 
Since $S_j$ is an extensive conjunctive $\infty $-minor of the family $(S_{j}^i)_{j\in J,i\in I_j}$ 
via the scheme $H_j$ we have 
$({\bf b}+\sigma )h_j\in S_j$. As the condition $({\bf b}+\sigma )h_j\in S_j$ holds for all $j$ in $J$ and 
$S$ is an extensive conjunctive $\infty $-minor of the family $(S_{j})_{j\in J}$ via $H$,
 we have that $\bf b$ is in $S$,   
 which completes the proof of Claim 1.

For a set $\mathcal{T}$ of $A$-to-$B$ constraints, we denote by ${\mathcal{T}}^{\infty }$ 
the set of those constraints
which are conjunctive $\infty $-minors of families of members of $\mathcal{T}$. 
This set ${\mathcal{T}}^{\infty }$ is the smallest set
of constraints containing $\mathcal{T}$ which is closed under formation of conjunctive $\infty $-minors and 
it is called the \emph{conjunctive $\infty $-minor closure} of $\mathcal{T}$.  
In the sequel, we shall make use of the following fact:

\emph{Fact 1.} Let $\mathcal{T}$ be a set of finitary $A$-to-$B$ constraints and 
let ${\mathcal{T}}^{\infty }$ be its conjunctive $\infty $-minor closure.
If $\mathcal{T}$ is closed under formation of conjunctive minors,
 then $\mathcal{T}$ is the set of all finitary constraints belonging to ${\mathcal{T}}^{\infty }$.

\emph{Claim 2.} Let $\mathcal{T}$ be a locally closed set of finitary $A$-to-$B$ constraints containing 
the binary equality constraint, the empty constraint,
 and closed under formation of conjunctive minors, and let ${\mathcal{T}}^{\infty }$ 
be its $\infty $-minor closure. 
Let $(R,S)$ be a finitary $A$-to-$B$ constraint not in $\mathcal{T}$.
Then there is a $B$-valued function $g$ on $A$ such that   
\begin{itemize}
\item[1)] $g$ satisfies every constraint in ${\mathcal{T}}^{\infty }$ 
\item[2)] $g$ does not satisfy $(R,S)$
\end{itemize}

{\sl Proof of Claim 2.} We shall construct a function $g$ which satisfies all constraints in 
${\mathcal{T}}^{\infty }$ but $g$ does not satisfy $(R,S)$.

Note that, by Fact 1, $(R,S)$ can not be in ${\mathcal{T}}^{\infty }$.
Let $m$ be the arity of $(R,S)$. 
Since $\mathcal{T}$ is locally closed and $(R,S)$ does not belong to $\mathcal{T}$,
 we know that there is a relaxation $(R_1,S_1)$ of $(R,S)$, where $R_1$ is finite, which is not in $\mathcal{T}$. 
Let $n$ be the number of $m$-tuples in $R_1$.
Observe that $S_1\not=B^m$, since the constraint $(A^m,B^m)$ is a simple minor of the 
binary equality constraint, and thus is in $\mathcal{T}$.
Also, $R_1$ is non empty, otherwise $(R_1,S_1)$ would be a relaxation of the empty constraint, and so would belong
to $\mathcal{T}$. Suppose $R_1$ consists of $n$ distinct $m$-tuples ${\bf d}^1,\ldots ,{\bf d}^n$.

 Consider the $m\times n$ matrix $F=({\bf d}^1\ldots {\bf d}^n)$.
Let $M=({\bf a}^1\ldots {\bf a}^n)$ be any matrix whose first $m$ rows are the rows of $F$ 
(i.e. $({\bf a}^1(i)\ldots {\bf a}^n(i))=({\bf d}^1(i)\ldots {\bf d}^n(i))$ for every $i\in m$)
 and whose other rows are
 the remaining distinct $n$-tuples in $A^n$: 
 every $n$-tuple in $A^n$ is a row of $M$, and any repetition of rows can only occur among the first $m$
rows of $M$.
Let $R_M$ be the relation whose elements are the columns of $M$, say of arity $\mu $.
Note that $m\leq \mu $ and that $\mu $ is infinite if and only if $A$ is infinite.
 Let $S_M$ be the $\mu $-ary relation 
consisting of those $\mu $-tuples ${\bf b}=(b_t\mid t\in \mu )$ in $B^{\mu }$ such that $(b_t\mid t\in m)$
belongs to $S_1$.

 Observe that $(R_M,S_M)$ can not belong to ${\mathcal{T}}^{\infty }$, because $(R_1,S_1)$ is a simple $\infty $-minor
of the possibly infinitary constraint $(R_M,S_M)$, and if $(R_M,S_M)\in {\mathcal{T}}^{\infty }$
 we would conclude, from Fact 1,
 that $(R_1,S_1)$ is in $\mathcal{T}$. 
 Also, there must exist a $\mu $-tuple ${\bf s} = (s_t\mid t\in \mu )$ in $B^{\mu }$ such that 
$(s_t\mid t\in m)$ is not in $S_1$, and for which $(R_M,B^{\mu }\setminus \{{\bf s}\})$ 
does not belong to $\mathcal{T}^{\infty }$, otherwise by arbitrary intersections of consequents we would 
conclude that $(R_M,S_M)$ belongs to ${\mathcal{T}}^{\infty }$.

Next we show that if two rows of $M$, say row $i$ and $j$, coincide, then the corresponding components of $\bf s$ also
coincide, $s_i=s_j$. For a contradiction, suppose that rows $i$ and $j$ coincide but $s_i\not=s_j$. 
Consider the $\mu $-ary $A$-to-$B$ constraint $(R^=,S^=)$ defined by 
 \begin{displaymath}
 R^==\{(a_t\mid t\in \mu ): a_i=a_j \} \quad \textrm{ and } \quad 
 S^==\{(b_t\mid t\in \mu ): b_i=b_j \}
\end{displaymath} 
The constraint $(R^=,S^=)$ is a simple $\infty $-minor of the binary equality constraint 
and therefore belongs to ${\mathcal{T}}^{\infty }$.
On the other hand $(R_M,B^{\mu }\setminus \{{\bf s}\})$ is a relaxation of $(R^=,S^=)$ and should also
 belong to ${\mathcal{T}}^{\infty }$,
yielding the intended contradiction.

Observe that the set of rows of $M$ is the set all $n$-tuples of $A^n$. Also, in view of the above, 
we can define an $n$-ary function $g$ by the condition $gM=\bf s$. 
 By definition of $\bf s$, $g$ does not satisfy $(R_M,S_M)$, and so it does not satisfy $(R_1,S_1)$.
So the function $g$ does not satisfy $(R,S)$.

Suppose that there is a $\rho $-ary constraint $(R_0,S_0)\in {\mathcal{T}}^{\infty }$, possibly infinitary,
 which $g$ does not satisfy.
Thus, for some $\rho \times n$ matrix $M_0=({\bf c}^1\ldots {\bf c}^n)$ with columns in $R_0$ we have 
$gM_0\not\in S_0$.
Define $h:\rho  \rightarrow {\mu }$ to be any map such that 
 \begin{displaymath}
({\bf c}^1(i)\ldots {\bf c}^n(i))=(({\bf a}^1h)(i)\ldots ({\bf a}^nh)(i))
\end{displaymath} 
 for every $i\in \rho $,
i.e. row $i$ of $M_0$ is the same as row $h(i)$ of $M$, for each $i\in \rho $.
Let $(R_h,S_h)$ be the ${\mu }$-ary simple $\infty $-minor of $(R_0,S_0)$ via $H=\{h\}$. 
Note that, by Claim 1,
 $(R_h,S_h)$ belongs to ${\mathcal{T}}^{\infty }$.

We claim that $R_M\subseteq R_h$. Any $\mu $-tuple in $R_M$ is a column ${\bf a}^j$ of 
$M=({\bf a}^1\ldots {\bf a}^n)$. To prove that ${\bf a}^j\in R_h$ we need to show that the $\rho $-tuple
${\bf a}^jh$ is in $R_0$. In fact, we have  
\begin{displaymath}
{\bf a}^jh=({\bf a}^jh(i)\mid i \in \rho )=({\bf c}^j(i)\mid i \in \rho )
\end{displaymath} 
and this $\rho $-tuple is in $R_0$.

Next we claim that $B^{\mu }\setminus \{{\bf s}\}\supseteq S_h$, i.e. that ${\bf s}\not\in S_h$. 
For that it is enough to show that ${\bf s}h\not\in S_0$.  
For every $i\in \rho $ we have
\begin{displaymath}
({\bf s}h)(i)=[g({\bf a}^1\ldots {\bf a}^n)h](i)=g[({\bf a}^1h)(i)\ldots ({\bf a}^nh)(i)]=g({\bf c}^1(i)\ldots {\bf c}^n(i))
\end{displaymath} 
Thus ${\bf s}h=gM_0$. Since $gM_0\not\in S_0$ we conclude that ${\bf s}\not\in S_h$. 

 So $(R_M,B^{\mu }\setminus \{{\bf s}\})$ is a relaxation of $(R_h,S_h)$ and 
we conclude that $(R_M,B^{\mu }\setminus \{{\bf s}\})$ is in ${\mathcal{T}}^{\infty }$.
By definition of $\bf s$, this is impossible. Thus we have proved Claim 2.

To see that the implication $(ii)\Rightarrow (i)$ of Theorem $2$ holds, observe that, by Claim 2,
for every constraint $(R,S)$ not in $\mathcal{T}$ there is a function $g$ 
which does not satisfy $(R,S)$ but satisfies every constraint in ${\mathcal{T}}^{\infty }$, and hence 
 satisfies every constraint in $\mathcal{T}$. Thus the set of all these "separating" functions constitutes 
the desired set characterizing $\mathcal{T}$.  
\end{proof}

Theorem $2$ generalizes the characterization of closed classes of constraints given by Pippenger 
in \cite{Pi2}
 by allowing both finite and infinite underlying sets 
and extending the closure conditions on classes of relational constraints (via the broadening of the concept 
of simple minors). The proof of Claim 2, being part of the proof of Theorem 2, 
differs from the analogous constructions of Geiger in \cite{G}
 and of Pippenger in \cite{Pi2} in that the function
$g$ separating the constraint $(R,S)$ is not obtained by successive extensions of partial functions but 
it is defined at once as a total function on $A^n$. 

Conjunctive minors are indeed strictly more general than simple minors:
the fact that the former are not subsumed by the latter in the infinite case is 
illustrated in the following Section.

\section{Comparison of Closures Based on Simple and Conjunctive Minors}

An $n$-ary \emph{$B$-valued partial function on $A$} is 
a function $p: D \rightarrow B$ where $D\subseteq A^n$. The partial function $p$ is said to be 
\emph{finite} if $D$ is a finite set.
If $\mathcal{F}$ is a set of $B$-valued partial functions on $A$, $\mathcal{F}$ is called an 
\emph{extensible family}
if for every $p\in \mathcal{F}$, $p: D \rightarrow B$ where $D\subseteq A^n$, and every 
${\bf y}\in A^n\setminus D$, $\mathcal{F}$ contains an extension $p': D' \rightarrow B$
of $p$ to the domain $D'=D\cup \{{\bf y}\}$.

If $p$ is an $n$-ary $B$-valued partial function on $A$ and $(R,S)$ an $m$-ary $A$-to-$B$ constraint  
we say that $p$ \emph{satisfies} $(R,S)$ if for every $m\times n$ matrix $M\prec R$, 
$M=({\bf a}^1\ldots {\bf a}^n)$,
 and such that every row of $M$ belongs to the domain of $p$, the $m$-tuple 
$(p({\bf a}^1(i)\ldots {\bf a}^n(i))\mid i\in m)$ 
belongs to $S$ .

\begin{proposition} For any sets $A$ and $B$, 
the set of all $A$-to-$B$ constraints satisfied by an extensible family $\mathcal{F}$ of $B$-valued partial 
functions on $A$ is locally closed
 and contains the binary equality constraint, the empty constraint, and it is closed under intersecting consequents
and under taking simple minors. 
\end{proposition}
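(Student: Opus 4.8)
The plan is to verify that the characterized set $\mathcal{T}$ --- the set of all $A$-to-$B$ constraints satisfied by every member of $\mathcal{F}$ --- enjoys each of the listed properties, disposing of four of them by directly unwinding the definition of satisfaction for partial functions and isolating closure under simple minors as the one place where the extensibility hypothesis is genuinely needed.

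First I would dispatch the equality and empty constraints and intersection of consequents directly. For the binary equality constraint, any $2 \times n$ matrix with columns in $=_A$ has its two rows equal, so whenever that common row lies in the domain of a $p \in \mathcal{F}$ the resulting pair lies in $=_B$; for the empty constraint no matrix can have all its columns in $\emptyset$, so it is satisfied vacuously. For intersection of consequents, if $p$ satisfies each $(R,S_j)$ then for any matrix $M \prec R$ with rows in $\mathrm{dom}(p)$ the tuple $pM$ lies in every $S_j$, hence in $\bigcap_{j} S_j$. None of these needs extensibility.

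Local closure I would prove by contradiction: if $(R,S) \notin \mathcal{T}$, then some $p \in \mathcal{F}$ fails it via a matrix $M \prec R$ whose rows lie in $\mathrm{dom}(p)$ and with $pM \notin S$. Letting $F$ be the finite set of columns of $M$, the constraint $(F,S)$ is a relaxation of $(R,S)$ with finite antecedent, hence by hypothesis lies in $\mathcal{T}$; but then $p$ satisfies $(F,S)$, forcing $pM \in S$, a contradiction.

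The substantial step, and the one I expect to be the main obstacle, is closure under simple minors. Suppose $(R,S)$ is the $m$-ary tight simple minor of $(R_0,S_0) \in \mathcal{T}$ via a single map $h : k \to m \cup V$, fix an $n$-ary $p \in \mathcal{F}$ and an $m \times n$ matrix $M = (\mathbf{a}^1 \ldots \mathbf{a}^n) \prec R$ with all rows in $\mathrm{dom}(p)$; the goal is $pM \in S$. Exactly as in Lemma 1, each column lying in $R$ produces a Skolem map $\sigma_l : V \to A$ with $(\mathbf{a}^l+\sigma_l)h \in R_0$, so that the $k \times n$ matrix $M_0 = ((\mathbf{a}^1+\sigma_1)h \ldots (\mathbf{a}^n+\sigma_n)h)$ has all columns in $R_0$. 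The obstruction is that the rows of $M_0$ indexed by indeterminates, namely the $n$-tuples $(\sigma_1(v) \ldots \sigma_n(v))$, need not lie in $\mathrm{dom}(p)$, so $p$ cannot be applied to $M_0$. This is exactly what extensibility repairs: only finitely many indeterminates occur in the image of $h$, so there are only finitely many such rows, and by finitely many applications of extensibility I can pass to an extension $p' \in \mathcal{F}$ of $p$ whose domain contains both the rows of $M$ and all of these Skolem rows. Then $p'M = pM$, and since $p'$ satisfies $(R_0,S_0)$ we obtain $p'M_0 \in S_0$. Setting $\tau = p'(\sigma_1 \ldots \sigma_n) : V \to B$ and repeating the concatenation and summation computation of Lemma 1 verbatim gives $(p'M+\tau)h = p'M_0 \in S_0$, which by the extensive (tightness) property defining $S$ yields $pM = p'M \in S$. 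As $p$ and $M$ were arbitrary, $(R,S) \in \mathcal{T}$.
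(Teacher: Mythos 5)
Your proposal is correct and follows the paper's proof essentially step for step: the paper likewise treats closure under simple minors as the only substantial claim, forms the finite set $A_0=\{(\sigma _1(v),\ldots ,\sigma _t(v)):v\in V\cap h[n]\}$ of Skolem rows, extends $p$ to some $p'\in \mathcal{F}$ over it by induction on extensibility, and then runs the computation of Lemma 1. The only cosmetic differences are that the paper passes to an arbitrary \emph{total} $f:A^t\rightarrow B$ with $f\mid _{D'}=p'$ so that the Section 3 composition identities apply verbatim, and it takes the Skolem map to be \emph{any} total map $V\rightarrow B$ agreeing with $p'(\sigma _1\ldots \sigma _t)$ on $V\cap h[n]$ --- a detail your $\tau $, which as a concatenation is a priori defined only where $(\sigma _1(v),\ldots ,\sigma _n(v))$ lands in the domain of $p'$, strictly needs as well, though it is harmless since $({\bf b}+\tau )h$ consults $\tau $ only on $V\cap h[k]$.
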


\begin{proof} The only non-trivial claim is that the set of 
all $A$-to-$B$ constraints satisfied by the extensible family $\mathcal{F}$ of $B$-valued partial 
functions on $A$ is closed under taking simple minors. 
Suppose that every member of the extensible family $\mathcal{F}$ satisfies an $n$-ary constraint
$(R_0,S_0)$. Let $(R,S)$ be an $m$-ary simple minor of $(R_0,S_0)$ via   
$h:n\rightarrow m\cup V$. 

Let $p\in \mathcal{F}$, $p: D \rightarrow B$, $D\subseteq A^t$. We need to show that 
$p$ satisfies $(R,S)$.
Take an $m\times t$ matrix $M=({\bf a}^1\ldots {\bf a}^t)$, $M\prec R$, 
 such that every row of $M$ is in $D$.
We know that ${\bf a}^1,\ldots ,{\bf a}^t\in R$, that is, there are Skolem maps 
${\sigma }_1,\ldots ,\sigma _t:V\rightarrow A$ such that 
$({\bf a}^1+\sigma _1)h,\ldots ({\bf a}^t+\sigma _t)h\in  R_0$.

We claim that $(p({\bf a}^1(i)\ldots {\bf a}^t(i))\mid i\in {m})$ belongs to $S$.
It is enough to show that for some $f: A^t \rightarrow B$ (not necessarily in $\mathcal{F}$)
such that $f\mid _D=p$ we have $f({\bf a}^1\ldots {\bf a}^t)\in S$. 
This latter membership in $S$ is equivalent to the existence of a Skolem map 
${\sigma }:V\rightarrow B$ such that $(f({\bf a}^1\ldots {\bf a}^t)+\sigma )h\in S_0$.   
We shall define such an $f$ and $\sigma $.

Observe that the set $A_0=\{({\sigma }_1(v),\ldots ,\sigma _t(v)):v\in V\cap h[n]\}$, where $h[n]$
is the range of $h$, is finite. By a straightfoward induction based on the definition of an extensible
family, it follows that there is an extension $p'$ of $p$, $p'$ in $\mathcal{F}$, whose domain is 
$D'=D\cup A_0$. Let ${\sigma }:V\rightarrow B$ be any Skolem map such that
$\sigma (v)=p'({\sigma }_1(v),\ldots ,\sigma _t(v))$ for all $v\in V\cap h[n]$.
 Note that every row of the $n\times t$ matrix 
$N=(({\bf a}^1+{\sigma }_1)h\ldots ({\bf a}^t+{\sigma }_t)h)$ is in the domain of $p'$.

Let $f: A^t \rightarrow B$ be any function (not necessarily in $\mathcal{F}$)
such that $f\mid _{D'}=p'$. We show that $(f({\bf a}^1\ldots {\bf a}^t)+\sigma )h\in S_0$.   
Using the rules in Section 3, we have
\begin{displaymath}
\begin{array}{l}
[f({\bf a}^1\ldots {\bf a}^t)+\sigma ]h=[f({\bf a}^1\ldots {\bf a}^t)+f({\sigma }_1\ldots {\sigma }_t)]h 
=[f(({\bf a}^1+{\sigma }_1)\ldots ({\bf a}^t+{\sigma }_t))]h=\\
=f[({\bf a}^1+{\sigma }_1)h\ldots ({\bf a}^t+{\sigma }_t)h]=
(p'[({\bf a}^1+{\sigma }_1)h(j)\ldots ({\bf a}^t+{\sigma }_t)h(j)]\mid j\in {n}) \qquad (2) 
\end{array}
\end{displaymath} 
\begin{displaymath}
\end{displaymath} 
Since $p'\in \mathcal{F}$, $p'$ satisfies $(R_0,S_0)$, and as the rows of $N$ are in the domain of $p'$,
we have that $(2)$ is in $S_0$.
 \end{proof}

Let $A$ and $B$ be sets with different infinite cardinalities, $card(A)>card(B)$. Let $\mathcal{F}$ 
be the set of all injective finite $B$-valued partial functions of several variables on $A$: 
this $\mathcal{F}$ is an extensible family.
Let $\Delta _A$ and $\Delta _B$ be the binary disequality relations on $A$ and $B$, respectively, defined by
 \begin{displaymath}
 \Delta _A=\{(a,b)\in A^2\mid a\not=b \}
\end{displaymath} 
 \begin{displaymath}
 \Delta _B=\{(c,d)\in B^2\mid c\not=d \}
\end{displaymath} 
It is easy to see that every member of $\mathcal{F}$ satisfies $(\Delta _A,\Delta _B$).

Let $V$ be a set of indeterminates equipotent to $A$ and let $\mathcal{V}$ be the set of all two-element subsets of $V$.
Denote $\{\alpha ,\beta \}\in \mathcal{V}$ by $\alpha \beta $ for short.
Take any strict total ordering $<$ on $V$, and for $\{\alpha ,\beta \}\in \mathcal{V}$ define
\begin{displaymath} 
 h_{\alpha \beta }:2\rightarrow 1\cup V     
\end{displaymath} 
 by $h_{\alpha \beta }(0)=min(\alpha ,\beta )$
and $h_{\alpha \beta }(1)=max(\alpha ,\beta )$.
(Actually, $h_{\alpha \beta }$ could be any map $2\rightarrow 1\cup V$ with range $\{\alpha ,\beta \}$.)

Define the family $(R_{\alpha \beta },S_{\alpha \beta })_{\alpha \beta \in \mathcal{V}}$
 of constraints by $(R_{\alpha \beta },S_{\alpha \beta })=(\Delta _A,\Delta _B)$
for all $\alpha \beta \in \mathcal{V}$.

The tight conjunctive minor of the family $(R_{\alpha \beta })_{\alpha \beta \in \mathcal{V}}$
via the scheme $H=(h_{\alpha \beta })_{\alpha \beta \in \mathcal{V}}$ is the full unary relation $A^1$ on $A$,
while the tight conjunctive minor of $(S_{\alpha \beta })_{\alpha \beta \in \mathcal{V}}$ via $H$
is the empty unary relation.
Therefore, $(A^1,\emptyset )$ is the tight conjunctive minor of the the family of constraints 
$(R_{\alpha \beta },S_{\alpha \beta })_{\alpha \beta \in \mathcal{V}}$ via $H$,
but clearly is not satisfied by the members
of $\mathcal{F}$. Thus, in view of Proposition 1, we obtain the following:

\begin{theorem}
 Conjunctive minors subsume simple minors, relaxations and intersections of consequents,
 but there are conjunctive minors
which can not be obtained by any combination of taking simple minors, relaxations or intersections of consequents.
\end{theorem}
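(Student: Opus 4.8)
The plan is to split the statement into its two assertions and dispatch them separately. The first assertion --- that conjunctive minors subsume simple minors, relaxations and intersections of consequents --- has in effect already been recorded in Section~3: a relaxation is merely a combination of restricting the antecedent and extending the consequent, and conjunctive minor formation was observed there to subsume simple minors, restriction of antecedents, extension of consequents and intersection of consequents. So for this half I would simply invoke that observation; no further work is needed.

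The entire content lies in the second assertion, for which the plan is to exhibit one explicit conjunctive minor that provably cannot be produced by any iteration of the three weaker operations. I would use exactly the data assembled just before the theorem: sets $A$ and $B$ with $card(A)>card(B)$, both infinite; the extensible family $\mathcal{F}$ of all injective finite $B$-valued partial functions on $A$; the constraint $(\Delta_A,\Delta_B)$, which every member of $\mathcal{F}$ satisfies; and the family $(R_{\alpha\beta},S_{\alpha\beta})_{\alpha\beta\in\mathcal{V}}$, each equal to $(\Delta_A,\Delta_B)$, together with the scheme $H=(h_{\alpha\beta})_{\alpha\beta\in\mathcal{V}}$. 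The crux is the computation that the tight conjunctive minor of this family via $H$ is $(A^1,\emptyset)$. For the antecedent, a unary tuple lies in the minor iff some Skolem map $\sigma:V\to A$ satisfies $\sigma(\alpha)\neq\sigma(\beta)$ for every two-element $\{\alpha,\beta\}$, i.e. iff $V$ injects into $A$; since $card(V)=card(A)$ this always holds (independently of the tuple), so the antecedent is all of $A^1$. For the consequent, the same condition demands an injection $V\to B$, which is impossible because $card(V)=card(A)>card(B)$, so the consequent is empty. This is precisely where the hypothesis $card(A)>card(B)$, and with it the genuinely infinite setting, does all the work.

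With $(A^1,\emptyset)$ in hand, I would finish by a closure argument. Write $\mathcal{T}_{\mathcal{F}}$ for the set of all $A$-to-$B$ constraints satisfied by $\mathcal{F}$. By Proposition~1 this set is closed under simple minors and under intersecting consequents; it is also closed under relaxations, since for partial-function satisfaction any relaxation $(R,S)$ of $(R_0,S_0)$ (with $R\subseteq R_0$ and $S\supseteq S_0$) is satisfied whenever $(R_0,S_0)$ is. Consequently every constraint obtainable from copies of $(\Delta_A,\Delta_B)\in\mathcal{T}_{\mathcal{F}}$ by any combination of these three operations again lies in $\mathcal{T}_{\mathcal{F}}$. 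But $(A^1,\emptyset)\notin\mathcal{T}_{\mathcal{F}}$: no $B$-valued function whatever satisfies a constraint with non-empty antecedent $A^1$ and empty consequent, because $fA^1$ is then non-empty. Hence the conjunctive minor $(A^1,\emptyset)$ cannot be obtained from $(\Delta_A,\Delta_B)$ --- nor from copies of it --- by taking simple minors, relaxations or intersections of consequents, which is exactly the strict-separation claim.

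The step I expect to be most delicate is the one already carried out in Proposition~1, namely that $\mathcal{T}_{\mathcal{F}}$ is closed under simple minors for a family of genuinely \emph{partial} functions: this forces one to extend each relevant partial function, via the extensibility hypothesis, before performing the Skolem-map bookkeeping, and it is this closure --- rather than the short cardinality computation --- that makes the whole separation legitimate. Granting Proposition~1, the remainder is the routine assembly described above.
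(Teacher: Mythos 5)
Your proposal is correct and follows essentially the same route as the paper: the subsumption half is the observation already recorded in Section~3, and the separation half is exactly the paper's construction of the tight conjunctive minor $(A^1,\emptyset)$ of copies of $(\Delta_A,\Delta_B)$ via the scheme indexed by two-element subsets of $V$, combined with Proposition~1 applied to the extensible family of injective finite partial functions. Your explicit remark that satisfaction by partial functions is also preserved under relaxations (which the paper leaves implicit, since Proposition~1 lists only simple minors and intersections of consequents) is a small but welcome completion of the closure bookkeeping; note only that your final non-membership argument should be phrased for members of $\mathcal{F}$ with non-empty domain rather than for total functions, which is how the paper states it.
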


In other words, conjunctive minors properly extend the notion of simple minors, and for infinite sets 
Theorem 2 in the previous Section can not be strengthened by replacing "conjunctive minors" with "simple minors",
as it can be in the finite case.

\section{Clones of Functions and Closed Sets of Relations}

In this section we make use of Theorems 1 and 2 in Sections 2 and 3, respectively, to derive variant
characterizations for clones of functions (operations) and closed sets of relations on arbitrary,
not necessarily finite sets. For clones of operations this general characterization is mentioned 
in \cite{G}, proved in \cite{Po2} and \cite{Po3}, and it is implicit in \cite{Sz}.
 The characterization of 
closed sets of relations is given below 
in terms of certain closure conditions which are variants of those in 
\cite{G} and \cite{PK}, in the case of finite underlying sets, and in \cite{Sz},
 and in \cite{Po2} and \cite{Po3},
in the general case of arbitrary sets.

Recall that if $f$ is an $n$-ary $E$-valued function on $B$ and $g_1,\ldots ,g_n$ are all $m$-ary 
$B$-valued functions 
on $A$, then the composition $f(g_1,\ldots ,g_n)$ is an $m$-ary to $E$-valued function on $A$,
 and its value on ${\bf a}\in A^m$ is $f(g_1({\bf a}),\ldots ,g_n({\bf a}))$.
 In this section we are concerned with the special case $A=B=E$, and this set may be finite or infinite.

A \emph{clone} on $A$ is a set of operations $\mathcal{C}\subseteq \cup _{n\geq 1}A^{A^n}$
 such that it contains all projections (variables) and it is closed under composition.

An operation $f\in A^{A^n}$ \emph{preserves} a relation $R$ on $A$ if $fR\subseteq  R$, i.e.,
 if $f$ satisfies the constraint $(R,R)$. 
A class $\mathcal{F}\subseteq \cup _{n\geq 1}A^{A^n}$ is said 
to be \emph{definable} by a set $\mathcal{R}$ of relations of various arities on $A$, 
if $\mathcal{F}$ is the class of all 
operations which preserve every member of $\mathcal{R}$.   
Similarly, a set $\mathcal{R}$ of relations of various arities on $A$ is said 
to be \emph{characterized} by a set $\mathcal{F}$ of operations on $A$,
 if $\mathcal{R}$ is the set of all relations which are preserved by 
every member of $\mathcal{F}$.

\begin{theorem}\emph{\bf (P\" oschel)}
Let $A$ be an arbitrary non-empty set and let $\mathcal{C}$ be a set of operations on $A$. 
Then the following conditions are equivalent:
\begin{itemize}
\item[(i)] $\mathcal{C}$ is a locally closed clone;
\item[(ii)] $\mathcal{C}$ is definable by some set of relations of various arities on $A$.
\end{itemize}
\end{theorem}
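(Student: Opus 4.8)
The plan is to deduce Theorem 3 from Theorem 1 by specializing to the case $B=A$ and translating between relations and constraints. Recall that an operation $f\in A^{A^n}$ preserves a relation $R$ exactly when it satisfies the $A$-to-$A$ constraint $(R,R)$; consequently, a class $\mathcal{C}$ is definable by a set $\mathcal{R}$ of relations if and only if it is definable by the set of constraints $\{(R,R):R\in\mathcal{R}\}$. So the whole task reduces to passing, in the presence of the clone axioms, between arbitrary constraints and constraints whose antecedent and consequent coincide.

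For the implication $(ii)\Rightarrow(i)$ I would argue as follows. If $\mathcal{C}$ is definable by relations, then it is definable by constraints, so the implication $(ii)\Rightarrow(i)$ of Theorem 1 immediately yields that $\mathcal{C}$ is locally closed and closed under simple variable substitutions. It then remains only to verify the clone axioms. Each projection preserves every relation, since applying the $i$-th projection to a matrix $M\prec R$ returns the $i$-th column of $M$, which lies in $R$; hence all projections belong to $\mathcal{C}$. For closure under composition, if $f_1,\ldots,f_k$ are $m$-ary operations preserving a $t$-ary relation $R$ and $h$ is a $k$-ary operation preserving $R$, then for any $t\times m$ matrix $M\prec R$ the columns $f_1M,\ldots,f_kM$ lie in $R$ and form a $t\times k$ matrix $M'\prec R$, so $hM'=(h(f_1,\ldots,f_k))M\in R$; thus $h(f_1,\ldots,f_k)$ preserves $R$ as well. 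Therefore $\mathcal{C}$ is a locally closed clone.

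For the implication $(i)\Rightarrow(ii)$, let $\mathcal{R}$ be the set of all relations preserved by every operation in $\mathcal{C}$. By definition every $f\in\mathcal{C}$ preserves every $R\in\mathcal{R}$, so $\mathcal{C}$ is contained in the class of operations preserving all of $\mathcal{R}$; the content is the reverse inclusion. Given $g\notin\mathcal{C}$, say $n$-ary, local closure supplies a finite $F\subseteq A^n$ on which $g$ disagrees with the restriction of every $n$-ary member of $\mathcal{C}$. Mimicking the Geiger--Pippenger construction used in Theorem 1, I form the $\mid F\mid\times n$ matrix $M$ whose rows are the tuples of $F$ and set $S=\{fM:f\in\mathcal{C},\ f\ n\textrm{-ary}\}$. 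I then claim that the single relation $S$ separates $g$ from $\mathcal{C}$: (a) since $\mathcal{C}$ contains the projections, every column of $M$ is obtained by applying a projection to $M$ and hence lies in $S$, so $M\prec S$; (b) since $\mathcal{C}$ is closed under composition, $S$ is preserved by every operation in $\mathcal{C}$, whence $S\in\mathcal{R}$; and (c) $gM\notin S$, because $gM=fM$ for some $n$-ary $f\in\mathcal{C}$ would force $g$ and $f$ to agree on all of $F$. By (a) and (c) the operation $g$ does not preserve $S$, while by (b) we have $S\in\mathcal{R}$; thus $g$ fails to preserve some member of $\mathcal{R}$, which gives the reverse inclusion and completes the proof.

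The step I expect to be the crux is claim (b): verifying that the consequent relation $S=\{fM:f\in\mathcal{C}\}$ is itself preserved by the entire clone. This is precisely where closure under composition is genuinely needed --- given a $\mid F\mid\times k$ matrix $N\prec S$ with columns $f_1M,\ldots,f_kM$ and an operation $h\in\mathcal{C}$, one must recognize that $hN=(h(f_1,\ldots,f_k))M$, so that $hN$ again has the form (operation in $\mathcal{C}$)$\,M$ and therefore lies in $S$. The accompanying observation that $R\subseteq S$, i.e. that the antecedent relation is already contained in the consequent (which is exactly what (a) records), is what permits the constraint $(R,S)$ arising from Theorem 1 to be replaced by the single relation $S$; this containment is the feature distinguishing clones from arbitrary constraint-definable classes, and it is what makes the reduction to preservation of relations possible.
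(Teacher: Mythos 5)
Your argument is correct and is essentially the paper's proof: the paper likewise deduces the theorem from Theorem 1, replacing each constraint $(R,S)$ in a defining set $\mathcal{T}$ of $A$-to-$A$ constraints by the single $\mathcal{C}$-invariant relation $\bar{R}=\bigcup_{f\in\mathcal{C}}fR$ sandwiched between $R$ and $S$, which for the Geiger--Pippenger separating constraint coincides (using clone closure under variable substitutions) with your relation $S=\{fM: f\in\mathcal{C},\ f\ n\textrm{-ary}\}$. The differences are presentational only: you inline the matrix construction from the proof of Theorem 1 instead of citing that theorem as a black box, and you spell out the verification of $(ii)\Rightarrow (i)$ and of the clone-invariance of the consequent, which the paper leaves implicit.
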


\begin{proof} It is easy to see that $(ii)$ implies $(i)$.

To see the converse, assume $(i)$. According to Theorem 1, $\mathcal{C}$ 
is definable by some set $\mathcal{T}$ of $A$-to-$A$ constraints.
Consider any constraint $(R,S)$ in $\mathcal{T}$. Let $\bar {R}={\cup }_{f\in \mathcal{C}}fR$.
 Clearly $\bar {R}\subseteq S$.   
It follows that $\mathcal{C}$ is definable by $\{\bar {R}: (R,S)\in \mathcal{T}\}$. 
\end{proof}

We say that a set $\mathcal{R}$ of relations of various arities on $A$ is \emph{locally closed} if 
for every relation $R$ on $A$ the following holds: 
if for every finite subset $F$ of $R$ there is a relation $R'$ in $\mathcal{R}$ such that 
$F\subseteq R'\subseteq R$, then $R$ belongs to $\mathcal{R}$.

\begin{theorem}\emph{\bf (Szab\' o)}
Let $A$ be an arbitrary non-empty set and let $\mathcal{R}$ be a set of relations 
of various arities on $A$.
 Then the following are equivalent:
\begin{itemize}
\item[(i)]$\mathcal{R}$ is locally closed and contains the binary equality relation,
 the empty relation, and is closed under formation of tight conjunctive minors;
\item[(ii)] $\mathcal{R}$ is characterized by some set of operations on $A$. 
\end{itemize}
\end{theorem}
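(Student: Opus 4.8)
The plan is to deduce this characterization from Theorem~2 by identifying each relation $R$ on $A$ with the \emph{diagonal} $A$-to-$B$ constraint $(R,R)$ (here $B=A$), exploiting that an operation $f$ preserves $R$ exactly when it satisfies $(R,R)$. For the implication $(ii)\Rightarrow(i)$ I would argue directly. The equality and empty relations are preserved by every operation, so they lie in $\mathcal{R}$. Local closure follows because if a matrix $M\prec R$ has its finitely many columns contained in some $R'\in\mathcal{R}$ with $R'\subseteq R$ (as supplied by the local-closure hypothesis applied to the finite set of columns), then $M\prec R'$ and preservation of $R'$ gives $fM\in R'\subseteq R$. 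For closure under tight conjunctive minors, given a family $(R_j)_{j\in J}$ in $\mathcal{R}$ with tight conjunctive minor $R$ via a scheme $H$, I would note that a tight minor is simultaneously restrictive and extensive, so $(R,R)$ is a conjunctive minor of the diagonal family $(R_j,R_j)_{j\in J}$ via $H$; Lemma~1 then shows that any operation preserving all the $R_j$ preserves $R$, whence $R\in\mathcal{R}$.

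For the converse $(i)\Rightarrow(ii)$, the idea is to manufacture the characterizing operations by applying Theorem~2 to a suitable auxiliary set of constraints. I would introduce the set $\mathcal{T}^{*}$ of all $A$-to-$A$ constraints $(R,S)$ such that for every finite $F\subseteq R$ there exists $R_{0}\in\mathcal{R}$ with $F\subseteq R_{0}\subseteq S$. This design is forced by two requirements. First, the diagonal constraints in $\mathcal{T}^{*}$ must be precisely the diagonals of members of $\mathcal{R}$: indeed $(R,R)\in\mathcal{T}^{*}$ means that every finite $F\subseteq R$ is sandwiched $F\subseteq R_0\subseteq R$ by some $R_0\in\mathcal{R}$, which is exactly the hypothesis that forces $R\in\mathcal{R}$ by local closure of $\mathcal{R}$; and conversely $(R,R)\in\mathcal{T}^{*}$ whenever $R\in\mathcal{R}$ (take $R_0=R$). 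Second, $\mathcal{T}^{*}$ must satisfy condition~(i) of Theorem~2. Containing the equality and empty constraints is immediate (take $R_{0}$ to be $=_{A}$ or $\emptyset$), and local closure of $\mathcal{T}^{*}$ is built into the ``for every finite $F$'' quantifier, since a relaxation $(F,S)$ of $(R,S)$ with $F\subseteq R$ finite already furnishes the required $R_{0}$.

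The main work, and the step I expect to be the real obstacle, is verifying that $\mathcal{T}^{*}$ is closed under formation of conjunctive minors. Given a conjunctive minor $(R,S)$ of a family $(R_{j},S_{j})_{j\in J}$ in $\mathcal{T}^{*}$ via a scheme $H=(h_{j})_{j\in J}$, and a finite $F\subseteq R$, I would use restrictiveness of $R$ to choose for each $\mathbf{a}\in F$ a Skolem map $\sigma_{\mathbf{a}}$ witnessing $R_{j}[(\mathbf{a}+\sigma_{\mathbf{a}})h_{j}]$ for all $j$; for each $j$ the finitely many tuples $(\mathbf{a}+\sigma_{\mathbf{a}})h_{j}$ form a finite subset of $R_{j}$, so membership of $(R_{j},S_{j})$ in $\mathcal{T}^{*}$ yields $R_{0}^{j}\in\mathcal{R}$ containing these tuples with $R_{0}^{j}\subseteq S_{j}$. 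Letting $R_{0}$ be the \emph{tight} conjunctive minor of $(R_{0}^{j})_{j\in J}$ via the \emph{same} scheme $H$ places $R_{0}\in\mathcal{R}$ by the tight-minor closure hypothesis; one then checks $F\subseteq R_{0}$ from extensivity of the tight minor (using the maps $\sigma_{\mathbf{a}}$) and $R_{0}\subseteq S$ from restrictiveness of the tight minor combined with $R_{0}^{j}\subseteq S_{j}$ and extensivity of $S$. With these verifications done, Theorem~2 supplies a set $\mathcal{F}$ of $A$-valued functions on $A$, i.e.\ operations, characterizing $\mathcal{T}^{*}$; reading off the diagonal constraints then gives that $R\in\mathcal{R}$ iff $(R,R)\in\mathcal{T}^{*}$ iff every $f\in\mathcal{F}$ satisfies $(R,R)$ iff every $f\in\mathcal{F}$ preserves $R$, so $\mathcal{R}$ is characterized by $\mathcal{F}$, as required.
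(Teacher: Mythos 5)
Your proposal is correct and is essentially the paper's own proof: your auxiliary set $\mathcal{T}^{*}$ coincides verbatim with the paper's $\mathcal{T}$, and your verification of closure under conjunctive minors (per-tuple Skolem maps, the relations $R_{0}^{j}\in\mathcal{R}$ sandwiching the finite sets, and the tight conjunctive minor of $(R_{0}^{j})_{j\in J}$ via the same scheme $H$ yielding $F\subseteq R_{0}\subseteq S$) is exactly the argument given there, followed by the same appeal to Theorem~2 and the same diagonal reading $(R,R)$. You merely spell out some details the paper leaves implicit, such as the direct proof of $(ii)\Rightarrow(i)$ and the role of local closure of $\mathcal{R}$ in showing $(R,R)\in\mathcal{T}^{*}$ forces $R\in\mathcal{R}$.
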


\begin{proof} It is not difficult to see that $(ii)$ implies $(i)$.

To prove the converse, define the set $\mathcal{T}$ of $A$-to-$A$ constraints by
 \begin{displaymath}
\mathcal{T}=\{(R,S): \textrm{ for every finite } F\subseteq R \textrm{ there is } R'\in \mathcal{R}
\textrm{ such that }F\subseteq R'\subseteq S \}.  
\end{displaymath} 
Note that $\mathcal{T}\supseteq \{(R,R): R\in \mathcal{R}\}$, and if $R\not\in \mathcal{R}$ 
then $(R,R)\not\in \mathcal{T}$.
By its definition $\mathcal{T}$ is locally closed.
Also $\mathcal{T}$ contains the binary equality and the empty constraints.

Let us show that $\mathcal{T}$ 
is closed under formation of conjunctive minors. For that, let $(R,S)$ be an $m$-ary 
conjunctive minor of the family
 $(R_j,S_j)_{j\in J}$ via a scheme  
$H=(h_j)_{j\in J}$, $h_j:n_j\rightarrow m\cup V$ where each $(R_j,S_j)$ is in $\mathcal{T}$.
 Let $F$ be a finite subset of $R$, with elements 
 ${\bf a}^1,\ldots ,{\bf a}^n$.
Since $R$ is a restrictive conjunctive minor of $(R_j)_{j\in J}$ via $H=(h_j)_{j\in J}$,
 there are Skolem maps ${\sigma }_i:V\rightarrow A$, $1\leq i\leq n$, 
such that, for every $j$ in $J$, the finite set $F_j$
whose elements are $({\bf a}^1+{\sigma }_1)h_j,\ldots ,({\bf a}^n+{\sigma }_n)h_j$ 
is contained in $R_j$.
By definition of $\mathcal{T}$, for every $j$ in $J$ there are relations $R^{'}_j$ in 
$\mathcal{R}$ such that $F_j\subseteq R^{'}_j\subseteq S_j$. 
Consider the tight conjunctive minor $R'$ of the family $( R^{'}_j)_{j\in J}$ via $H$. 
Since $S$ is an extensive conjunctive minor of $(S_j)_{j\in J}$ via $H=(h_j)_{j\in J}$,
 and, for every $j$ in $J$,  
$F_j\subseteq R^{'}_j\subseteq S_j$, it follows that $F\subseteq {R'}\subseteq S$.
In other words, $(R,S)$ belongs to $\mathcal{T}$.
So $\mathcal{T}$ is indeed closed under formation of conjunctive minors.

 According to Theorem 2, there 
is a set $\mathcal{F}$ of operations on $A$
which satisfy exactly those 
constraints that are in $\mathcal{T}$. Then $\mathcal{F}$ is a set of operations preserving
 exactly those relations which are in $\mathcal{R}$. 
\end{proof}

As mentioned above, in the general case of arbitrary 
underlying sets the conditions in $(i)$ of Theorem 5 characterizing the closed sets of relations
 are equivalent to those given by Szab\' o in \cite{Sz} and P\" oschel in \cite{Po2} and \cite{Po3}.
For the equivalence between Szab\' o's and P\" oschel's approaches see e.g. \cite{Po3}.

The following concept was introduced by P\" oschel in \cite{Po2}.
We shall again make use of the notation introduced in Section 3.  
Consider arbitrary non-empty sets $A$ and $B$.
Let $(R_j)_{j\in J}$ be a non-empty family of relations on $A$ where, for each $j\in J$, 
$R_j$ has arity $n_j$.
Let $m\geq 1$, ${\bf b}\in B^m$ and let $({\bf b}_j)_{j\in J}$ be a family with  
${\bf b}_j\in B^{n_j}$ for each ${j\in J}$.
 The $m$-ary relation $R$ on $A$ defined by
 \begin{displaymath}
R=\{ f{\bf b}\in A^m: f\in A^B \textrm{ and for each }j\in J, f{\bf b}_j\in R_j\}
\end{displaymath}
is said to be obtained from the family $(R_j)_{j\in J}$ by 
\emph{general superposition with respect to 
${\bf b}$ and the family $({\bf b}_j)_{j\in J}$}. 
(This reformulation appears in e.g. [P$\ddot o$4].)    
Recall that $f{\bf b}$ is the $m$-tuple $(f{\bf b}(i)\mid i\in m)$
and, for each ${j\in J}$, $f{\bf b}_j$ is the $n_j$-tuple $(f{\bf b}_j(i)\mid i\in n_j)$.
Note that general superposition subsumes formation of tight conjunctive minors of relations:
indeed if $R$ is a tight conjunctive minor of $(R_j)_{j\in J}$ via $H=(h_j)_{j\in J}$ then we can
define $B=m\cup V$, ${\bf b}={\iota }_{Bm}$
where $\iota _{Bm}$ is the canonical injection (inclusion map) from $m$ to $B$, and ${\bf b}_j=h_j$ where 
$h_j:n_j\rightarrow m\cup V$. 
Also, it is easy to see that the binary equality constraint can be obtained from the full unary relation
$R_0=A^1$ by general superposition with singleton $J$ and singleton $B$.

A set $\mathcal{R}$ of relations of various arities on $A$ is said to be \emph{closed under 
general superpositions} if whenever every member of a non-empty family $(R_j)_{j\in J}$ of relations
 is in $\mathcal{R}$, all relations obtained from the family $(R_j)_{j\in J}$ by general superpositions 
 are also in $\mathcal{T}$.

The following shows that the characterization of closed sets of relations
given in Theorem 5 is equivalent to that appearing in \cite{Po2}, \cite{Po3} and \cite{Po4}.

\begin{theorem}
Let $A$ be a non-empty set and $\mathcal{R}$ a locally closed set of relations of various arities on $A$
containig the empty relation. The following conditions are equivalent:
\begin{itemize}
\item[(i)]$\mathcal{R}$ contains the binary equality relation and
 is closed under formation of tight conjunctive minors;
\item[(ii)] $\mathcal{R}$ contains the full unary relation $A^1$ 
and is closed under general superpositions.
\end{itemize}
\end{theorem}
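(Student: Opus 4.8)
The plan is to derive both implications from the two facts recorded immediately before the statement: that the formation of tight conjunctive minors of relations is a special case of general superposition (via $B=m\cup V$, ${\bf b}=\iota_{Bm}$, ${\bf b}_j=h_j$), and that $=_A$ is a general superposition of $A^1$ over a singleton index set and singleton $B$. Granting these, the implication $(ii)\Rightarrow(i)$ is immediate: since every tight conjunctive minor of a family is in particular a general superposition of that family, closure of $\mathcal{R}$ under general superpositions entails closure under formation of tight conjunctive minors; and since $=_A$ arises from $A^1\in\mathcal{R}$ by a general superposition, $\mathcal{R}$ must contain the binary equality relation.

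The work lies in $(i)\Rightarrow(ii)$. First I would note that $A^1$ is the tight conjunctive minor of the single relation $=_A$ via the scheme $h:2\to 1$, $h(0)=h(1)=0$, with empty indeterminate set, so $A^1\in\mathcal{R}$ by closure under tight conjunctive minors. The remaining and main task is to show that $\mathcal{R}$ is closed under general superpositions. So suppose $R$ is obtained from relations $(R_j)_{j\in J}$ of $\mathcal{R}$ by general superposition with respect to ${\bf b}:m\to B$ and $({\bf b}_j)_{j\in J}$, each ${\bf b}_j:n_j\to B$. Discarding from $B$ everything outside $\mathrm{range}({\bf b})\cup\bigcup_{j}\mathrm{range}({\bf b}_j)$ alters neither $f{\bf b}$ nor any constraint $f{\bf b}_j\in R_j$, so I may assume $B$ equals this union. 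Put $V=B\setminus\mathrm{range}({\bf b})$ and fix a section $s:B\to m\cup V$ of the surjection $\pi:m\cup V\to B$ that agrees with ${\bf b}$ on $m$ and with the identity on $V$; define the scheme $H=(h_j)_{j\in J}$ by $h_j=s{\bf b}_j$. The key step is then to verify the identity $R=R'\cap E$, where $R'$ is the tight conjunctive minor of $(R_j)_{j\in J}$ via $H$ and $E=\{{\bf a}\in A^m: a_i=a_{i'}\ \mathrm{whenever}\ b_i=b_{i'}\}$. This is a short computation resting on $\pi s=\mathrm{id}_B$: if ${\bf a}=f{\bf b}\in R$, then $\sigma=f\mid_V$ witnesses ${\bf a}\in R'$ (because $({\bf a}+\sigma)h_j=f{\bf b}_j\in R_j$) and the values of $f$ force ${\bf a}\in E$; conversely, from ${\bf a}\in R'\cap E$ one builds $f=({\bf a}+\sigma)s$, for which $f{\bf b}_j=({\bf a}+\sigma)h_j\in R_j$ and, using membership in $E$, $f{\bf b}={\bf a}$, so ${\bf a}\in R$.

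To conclude, each $E_{i,i'}=\{{\bf a}:a_i=a_{i'}\}$ is a tight conjunctive minor of $=_A$ (via $2\to m$, $0\mapsto i$, $1\mapsto i'$), hence lies in $\mathcal{R}$; the relation $E$ is the intersection of these $E_{i,i'}$, and an intersection of $m$-ary relations is itself a tight conjunctive minor taken via identity maps. Thus $R=R'\cap E$ exhibits $R$ as a tight conjunctive minor of the family $\{R'\}\cup\{E_{i,i'}:b_i=b_{i'}\}$, whose members all belong to $\mathcal{R}$, and closure under tight conjunctive minors yields $R\in\mathcal{R}$. I expect the main obstacle to be precisely the case of a non-injective ${\bf b}$: when ${\bf b}$ identifies output coordinates, the bare tight conjunctive minor $R'$ is strictly larger than $R$, and one must trim it by the diagonal relation $E$ built from equality. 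A secondary point worth checking is that the construction never leaves the realm of finitary tight conjunctive minors even when $J$ and $B$ are infinite, since only $m$ and the arities $n_j$ must be finite while the indeterminate set $V$ and index set $J$ are unrestricted; consequently the local-closure hypothesis plays no role in this direction.
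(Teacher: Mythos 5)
Your proposal is correct and takes essentially the same route as the paper: the paper also derives $(ii)\Rightarrow(i)$ from the two preceding observations, obtains $A^1$ as a tight conjunctive minor of $=_A$, and proves $(i)\Rightarrow(ii)$ via the identity $R=R'\cap R^{=}_{\bf b}$, where $R'$ is the tight conjunctive minor via the scheme satisfying $({\bf b}+\iota _{BV})h_j={\bf b}_j$ (your $h_j=s{\bf b}_j$ through a section $s$ of ${\bf b}+\iota _{BV}$ is the same scheme) and $R^{=}_{\bf b}$ is your $E$. Your added details --- trimming $B$, writing $E$ as an intersection of the minors $E_{i,i'}$ of $=_A$ and realizing that intersection as a tight conjunctive minor via identity maps, and noting that local closure is not used --- merely make explicit steps the paper leaves implicit.
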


\begin{proof} The implication $(ii)\Rightarrow (i)$ follows from the observations above.

To prove that implication $(i)\Rightarrow (ii)$ holds note first that 
the full unary relation $A^1$ is a tight conjunctive minor of the binary equality relation.
Let us show that every relation obtained from a family of relations $(R_j)_{j\in J}$
 by general superposition can be obtained by intersecting 
a tight conjuctive minor of the family $(R_j)_{j\in J}$ with 
a tight conjunctive minor of the binary equality relation.

Let $(R_j)_{j\in J}$ be a non-empty family of relations on $A$ where, for each $j\in J$, 
$R_j$ has arity $n_j$, and  
let $R$ be the $m$-ary relation on $A$ obtained from the family $(R_j)_{j\in J}$ by 
general superposition with respect to 
${\bf b}\in B^m$ and family $({\bf b}_j)_{j\in J}$ with ${\bf b}_j\in B^{n_j}$ for each ${j\in J}$,
 where without loss of generality $B$ is a non-empty set disjoint from the ordinals.   
Consider the $m$-ary relation $R_{\bf b}^=$ on $A$ defined by
 \begin{displaymath}
 R_{\bf b}^==\{(a_t\mid t\in m ): a_i=a_j  \textrm{ for every $i,j\in m$ such that }{\bf b}(i)={\bf b}(j) \}
\end{displaymath} 
It is easy to see that $R_{\bf b}^=$ is a tight conjunctive minor of the binary equality relation.

Let $V$ be the complement in $B$ of the range of ${\bf b}$.
Consider the minor formation scheme $H=(h_j)_{j\in J}$
with target $m$, indeterminate set $V$ and source family 
$(n_j)_{j\in J}$, and where, for each $j\in J$, $h_j:n_j\rightarrow m\cup V$
is such that
\begin{displaymath}
({\bf b}+\iota _{BV})h_{j}={\bf b}_j
\end{displaymath} 
Consider the $m$-ary tight conjunctive minor $R'$ of the family $(R_j)_{j\in J}$ via $H$.

Let us show that $R=R'\cap R_{\bf b}^=$. Observe that $R\subseteq R_{\bf b}^=$.
 Let ${\bf a}\in R$. Then, for some function $f:B\rightarrow A$, ${\bf a}=f{\bf b}$ and, 
for each $j\in J$, $f{\bf b}_j\in R_j$. 
Define the Skolem map $\sigma :V\rightarrow A$ by $\sigma =f\iota _{BV}$.
By definition of $H$ and $\sigma $ we have 
\begin{displaymath}
({\bf a}+\sigma )h_j=(f{\bf b}+f\iota _{BV})h_{j}=[f({\bf b}+\iota _{BV})]h_{j}=f[({\bf b}+\iota _{BV})h_{j}]=f{\bf b}_j
\end{displaymath} 
for each $j\in J$. Thus, for every $j\in J$, $({\bf a}+\sigma )h_j\in R_j$, and we have ${\bf a}\in R'$.
Since $R\subseteq R_{\bf b}^=$, we conclude $R\subseteq R'\cap R_{\bf b}^=$.

To show that $R'\cap R_{\bf b}^=\subseteq R$, let ${\bf a}\in R'\cap R_{\bf b}^=$.
Since $R'$ is the tight conjunctive minor of the family $(R_j)_{j\in J}$ via $H$,   
there is a Skolem map $\sigma :V\rightarrow A$ such that for every $j$ in $J$ we have 
$({\bf a}+\sigma )h_j\in R_j$. It follows from definition of $R_{\bf b}^=$ that, for every
$i$ and $j$ in $J$, ${\bf a}(i)={\bf a}(j)$ if ${\bf b}(i)={\bf b}(j)$. 
Let $f:B\rightarrow A$ be such that $f{\bf b}={\bf a}$ and $f\iota _{BV}=\sigma $.
We have 
\begin{displaymath}
f{\bf b}_j=f[({\bf b}+\iota _{BV})h_{j}]=[f({\bf b}+\iota _{BV})]h_{j}=(f{\bf b}+f\iota _{BV})h_{j}=({\bf a}+\sigma )h_j
\end{displaymath} 
and so, for each $j\in J$, $f{\bf b}_j\in R_j$. Thus $f{\bf b}\in R$, that is, ${\bf a}\in R$.
\end{proof}

\end{document}